\documentclass{article}

\usepackage{amsmath,amssymb,amsthm,enumerate}
\usepackage{comment,color,graphicx,subfigure,tikz,tikz-cd}

\newcommand\N{\mathbb N}
\newcommand\Z{\mathbb Z}
\newcommand\R{\mathbb R}
\newcommand\C{\mathbb C}
\newcommand{\Aut}{\mathrm{Aut}}
\newcommand{\End}{\mathrm{End}}
\DeclareMathOperator{\Sym}{Sym}
\DeclareMathOperator{\GL}{GL}

\DeclareMathOperator{\Homeo}{Homeo}
\DeclareMathOperator{\id}{id}

\theoremstyle{plain}
\newtheorem{lemma}{Lemma}
\newtheorem{theorem}{Theorem}
\newtheorem{metalemma}{Metalemma}
\newtheorem{proposition}{Proposition}
\newtheorem{corollary}{Corollary}

\newtheorem{remark}{Remark}

\theoremstyle{definition}
\newtheorem{definition}{Definition}

\newcommand{\Set}{{\normalfont\textbf{Set}}}
\newcommand{\CountSet}{{\normalfont\textbf{CountSet}}}
\newcommand{\FinSet}{{\normalfont\textbf{FinSet}}}
\newcommand{\FinVect}{{\normalfont\textbf{FinVect}}}
\newcommand{\Top}{{\normalfont\textbf{Top}}}
\newcommand{\Pos}{{\normalfont\textbf{Pos}}}

\newcommand{\proj}{\mathrm{proj}}

\title{Alternation diameter of a product object}

\author{Ville Salo}

\begin{document}
\maketitle

\begin{abstract}
We prove that every permutation of a Cartesian product of two finite sets can be written as a composition of three permutations, the first of which only modifies the left projection, the second only the right projection, and the third again only the left projection, and three alternations is indeed the optimal number. We show that for two countably infinite sets, the corresponding optimal number of alternations, called the alternation diameter, is four. The notion of alternation diameter can be defined in any category. In the category of finite-dimensional vector spaces, the diameter is also three. For the category of topological spaces, we exhibit a single self-homeomorphism of the plane which is not generated by finitely many alternations of homeomorphisms that only change one coordinate. The results on finite sets and vector spaces were previously known in the context of memoryless computation.
\end{abstract}

\section{Introduction}

We prove in this paper that every permutation of a Cartesian product of two finite sets can be performed by first permuting on the left, then on the right, then on the left. This is Theorem~\ref{thm:Two}, and its proof is a reduction to Hall's marriage theorem. For two countably infinite sets, every permutation of the Cartesian product can be performed either by permuting left-right-left-right or right-left-right-left (sometimes only one of these orders works). This is Theorem~\ref{thm:CountableCase}. Its proof is elementary set theory, a Hilbert's Hotel type argument. We also prove that for the direct product of two finite-dimensional vector spaces, every linear automorphism can be written by a composition of three linear automorphisms, the first of which only modifies the left coordinate, the second the right coordinate and the third the left coordinate. This is Theorem~\ref{thm:FinVect}. In Theorem~\ref{thm:Homeo}, we show that a similar result does not hold for general self-homeomorphisms of the plane.

It turns out that the results on finite sets and vector spaces have been independently proved several times. See Section~\ref{sec:Existing} for this and other related results. 

The study of such alternations arises from the unpublished draft \cite{SaSc16a} where this optimal number of alternations was studied for automorphism groups of subshifts. Here, we note that this notion applies to the automorphism group of a product object in any category, and study it in some categories of interest. We call the optimal length of a left-right alternation the \emph{alternation diameter} (see below for more detailed definitions).

The alternation diameter is interesting as a general concept since the automorphism group of a product object always contains the automorphism groups of the left and right components (in a natural way) -- and more generally what we call the left and right groups (defined below) --, but in addition it can contain many ``entirely new'' automorphisms that can only be understood globally, and do not easily reduce to the study of the left or right component of the product separately. In the case that the alternation diameter is bounded for a particular product object and the left and right groups are easy to describe, we get a handle on the elements of the automorphism group, at least as a set. Of course, in complicated categories we cannot expect this to happen very generally, but it can be a helpful technique in the study of automorphism groups of individual objects when it succeeds.

In this note we consider some simple categories where alternation diameter is actually globally bounded over the whole category (though in these simple cases it does not really help in understanding the automorphism group of a product), and show by examples that the left and right groups, not surprisingly, do not in general generate the automorphism group of a product object in more complex categories.

In terms of the alternation diameter, our results are the following: 

\begin{theorem}
The category
\begin{itemize}
\item $\FinSet$ of finite sets and functions has alternation diameter $3$,
\item $\CountSet$ of countable sets and functions has alternation diameter $4$,
\item $\FinVect$ of finite-dimensional vector spaces and linear maps has alternation diameter $3$,
\end{itemize}
\end{theorem}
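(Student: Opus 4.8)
The plan is to split each of the three claimed values into an \emph{upper bound} and a \emph{lower bound}, and to note that the upper bounds are exactly the three theorems announced in the introduction. Throughout, for a product $X\times Y$ I write $L$ for the subgroup of $\Aut(X\times Y)$ consisting of automorphisms $f$ with $\proj_Y\circ f=\proj_Y$, and $R$ for those with $\proj_X\circ f=\proj_X$; a composite alternating between factors of $L$ and of $R$ is an alternating word, its length is the number of factors, and the alternation diameter of a product object is the least $n$ such that every automorphism is an alternating word of length at most $n$. The category's diameter is the supremum over all its product objects, so for each category I must both bound every product object and produce one bad object forcing the claimed value from below.

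For the upper bounds I would invoke the three cited results directly. Theorem~\ref{thm:Two} writes every permutation of a product of two finite sets as $\ell_2 r\ell_1$, so every $\FinSet$-product has diameter at most $3$, and Theorem~\ref{thm:FinVect} gives the same for $\FinVect$. For $\CountSet$ I would separate cases by the cardinalities of the factors: two finite factors are covered by Theorem~\ref{thm:Two} (bound $3$), and a factor that is countably infinite is covered by Theorem~\ref{thm:CountableCase} (bound $4$, via $LRLR$ or $RLRL$), and the supremum of these is $4$. The only point needing care is the mixed finite-by-infinite case, which I would absorb into the Hilbert-hotel argument of Theorem~\ref{thm:CountableCase}.

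The lower bounds for $\FinSet$ and $\FinVect$ come from one computation. A short check shows $f\in LR$ if and only if $y\mapsto\proj_Y f(x,y)$ is a bijection of $Y$ for each fixed $x$ --- in $\FinVect$ this says the $Y\!\to\!Y$ block of $f$ is invertible --- and symmetrically $f\in RL$ if and only if $x\mapsto\proj_X f(x,y)$ is a bijection of $X$ for each fixed $y$ (the $X\!\to\!X$ block is invertible). To exceed length $2$ it therefore suffices to exhibit an automorphism failing both conditions while lying in neither $L$ nor $R$. On $\{0,1\}\times\{0,1\}$ the transposition exchanging $(0,1)$ and $(1,0)$ has constant $Y$-projection on $\{0\}\times\{0,1\}$ and constant $X$-projection on $\{0,1\}\times\{0\}$, so it escapes $L\cup R\cup LR\cup RL$ and needs length $3$; with Theorem~\ref{thm:Two} this gives diameter exactly $3$. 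For $\FinVect$ the swap $(x,y)\mapsto(y,x)$ on $k\times k$ has both diagonal blocks zero, so the same reasoning and Theorem~\ref{thm:FinVect} give diameter exactly $3$.

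The genuine difficulty is the lower bound for $\CountSet$: I need one permutation of $\N\times\N$ lying outside $LRL\cup RLR$. My approach is to convert membership in $LRL$ into an infinite matching problem. Since $f=\ell_2 r\ell_1$ is equivalent to the existence of a single $\ell\in L$ with $\ell f\in RL$, and $\ell f\in RL$ means that $\ell f$ carries every $Y$-fibre onto a set meeting each $X$-fibre exactly once, deciding $f\in LRL$ amounts to choosing, compatibly across all fibres at once, permutations that realize a system of distinct representatives. In the finite case this is solvable by Hall's marriage theorem --- precisely the mechanism behind Theorem~\ref{thm:Two} --- but in the countable case a system of distinct representatives can fail to exist even when every finite subsystem satisfies Hall's condition, as in the classical example of one source adjacent to all targets while each remaining source has a single private target. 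I would engineer a permutation of $\N\times\N$ whose $LRL$-decomposition problem reproduces exactly such an unsatisfiable infinite matching, and, by making the construction symmetric under exchanging the two factors, force $f\notin RLR$ at the same time. The hard part will be proving that \emph{no} choice of the fibrewise permutations works --- not merely that an obvious one fails --- and I expect to extract from the matching obstruction a conserved ``deficiency'' quantity that is respected by every length-$3$ word but violated by $f$; together with the upper bound from Theorem~\ref{thm:CountableCase} this pins the diameter of $\CountSet$ at $4$.
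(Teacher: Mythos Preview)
Your upper bounds and the $\FinSet$/$\FinVect$ lower bounds are correct and essentially coincide with the paper's arguments (your characterization of $LR$ and $RL$ is Lemma~\ref{lem:RLCharacterization}, and your $\FinVect$ witness is exactly the paper's $\left(\begin{smallmatrix}0&1\\1&0\end{smallmatrix}\right)$; for $\FinSet$ the paper also gives a counting argument, but your explicit witness works fine).

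The real divergence is in the $\CountSet$ lower bound, and here your proposal is both harder and incomplete. You aim to build a \emph{symmetric} permutation of $\N\times\N$ lying outside $G_{LRL}\cup G_{RLR}$ by encoding an infinite Hall obstruction, and you concede the hard step---ruling out \emph{every} choice of fibrewise permutation---is not yet done (``I expect to extract \ldots\ a conserved deficiency quantity''). The paper avoids this entirely by aiming at a larger target: since $G_{LRL}\cup G_{RLR}\subset G_{RLRL}$, it suffices to exhibit one $\pi\notin G_{RLRL}$. The example is maximally asymmetric rather than symmetric: a matrix whose $A$-projection has \emph{all} nonzero values concentrated in a single column (possible because $|B|=|A\times B|$). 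Then the leading $G_L$ in $RLRL$ is visibly wasted on this matrix, the subsequent $G_R$ can move at most one nonzero entry into each row, and a finite pigeonhole in the top-left $3\times 3$ block shows the remaining $G_{RL}$ cannot finish. This is five lines and needs no matching theory.

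So your plan is not wrong, but it is working harder than necessary: trying to defeat $LRL$ and $RLR$ simultaneously with one symmetric object forces you into a delicate two-sided analysis, whereas the paper's trick of passing to the \emph{larger} one-sided set $G_{RLRL}$ lets a crude, asymmetric example do the job.
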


Our examples of categories where alternation diameter is undefined (meaning that left and right alternations do not generate all automorphisms) are the following: The plane $\R \times \R$ has, for slightly non-trivial reasons, undefined alternation diameter in the category $\Top$ of topological spaces and continuous functions. The square $[0,1] \times [0,1]$ has undefined alternation diameter in $\Top$ for trivial reasons. In $\Pos$, the product posets $[n] \times [n]$ and $D \times D$ where $D$ is the \emph{diamond}, i.e. the poset with Hasse diagram $\diamond$, have undefined alternation diameter for (different) trivial reasons. (By a more \Pos-specific proof, Maximilien Gadouleau has shown that in fact $X \times X$ has undefined alternation diameter for every finite poset $X$.)

We now give some more detailed definitions (see \cite{Ma71} for basic examples and notions of category theory\footnote{Other than Section~\ref{sec:GeneralDef} and Section~\ref{sec:Posets}, we are not concerned with the ``theory'', mainly terminology and concepts.}). In the category of sets and functions a product object $A \times B$ is just the Cartesian product of $A$ and $B$, and the automorphism group of an object is just the full permutation group on that set. In concrete categories where products behave this way, for a product object $A \times B$ and its automorphism group $G$, we define the \emph{Left group} $G_L$, namely the subgroup of $G$ containing those automorphisms of $A \times B$ that modify only the left coordinate, i.e. satisfy $\forall a \in A, b \in B: \exists c \in A: g(a, b) = (c, b)$. Define the \emph{Right group} $G_R$ analogously. Our precise statements about sets are that in the category $\FinSet$ of finite sets, $G = G_LG_RG_L = G_RG_LG_R$, and in the category $\CountSet$ of countable sets, $G = G_LG_RG_LG_R \cup G_RG_LG_RG_L$ but generally $G \notin \{G_LG_RG_LG_R, G_RG_LG_RG_L\}$).

In less $\Set$-like categories one can define $G_L$ and $G_R$ in terms of commutative diagrams (see Section~\ref{sec:GeneralDef} for the diagrams): let $A \times B$ be the category-theoretic product of $A$ and $B$, and $\proj_A : A \times B \to A$ and $\proj_B : A \times B \to B$ the projections defining $A \times B$. Define $G_L$ as the group of automorphisms $g : A \times B \to A \times B$ satisfying $\proj_B \circ g = \proj_B$, and $G_R$ those $g$ satisfying $\proj_A \circ g = \proj_A$. The groups obtained, as well as the subgroups $G_L$ and $G_R$, do not depend on the choice of the product $A \times B$ (up to isomorphism of the triples $(G, G_L, G_R)$).

In categories of sets, it is convenient to think of $A \times B$ as indices into a (possibly infinite) matrix, with $A$ indexing the rows and $B$ the columns. This is the suggested convention for mental pictures and is the one used in the proofs. Then $G_L$ is the \emph{coLumn group} that performs a permutation in each column separately (independently of each other) and $G_R$ is the \emph{Row group} that performs independent permutations on rows. Then $G = G_R G_L G_R$ in $\FinSet$ states precisely that any $m$-by-$n$ matrix containing each element of $\{0,1,...,m-1\} \times \{0,1,...,n-1\}$ exactly once can be turned into the matrix $X_{ab} = (a,b)$ by first permuting each row, then each column, then each row.

In group-theoretic terms, since $G_L$ and $G_R$ are subgroups, we can state the weaker fact $G = G_LG_RG_L \cup G_RG_LG_R$ equivalently as follows: the group $G$ is generated by $G_L \cup G_R$, and its diameter with respect to the generating set $G_L \cup G_R$ is at most three (independently of $A$ and $B$). This diameter is in general what we call the \emph{alternation diameter} of $G$, or of the object $A \times B$ having $G$ as automorphism group. The alternation diameter of a category is the least upper bound of alternation diameters of products.

This diameter can infinite for a single object $A \times B$, when $G$ is generated by $G_L$ and $G_R$, but is not equal to a finite number of alternations (this is what happens in \Top\ and \Pos). In principle a category may also have infinite alternation diameter due to objects having arbitrarily large finite alternation diameters. If $G$ is not generated by $G_L$ and $G_R$ at all, we say the alternation diameter is \emph{undefined}, and a category has undefined alternation diameter if some product in it does.


\section{Existing related work}
\label{sec:Existing}

The case of finite sets, which started this paper, is inspired by \cite{Se18}, where it is shown that any permutation of $A \times C \times B$, for three sets $A, C, B$ with $|A| = |B| = 2, |C| \geq 3$, can be written as a composition of finitely many permutations where alternately only $A \times C$ or $C \times B$ is permuted. Our notion of alternation diameter in the category of finite sets is related to this definition, as it also means refers to ``alternately permuting a product on the left and right''. The difference is that there is no communication coordinate (making it harder), but we allow the permutation to depend on the value on the right when permuting the value on the left, and vice versa (making it easier). 

It turns out that the results about finite sets and vector spaces have been proved before in the context of memoryless computation: \cite[Theorem~3]{BuGiTh14} and \cite[Theorem 2]{GaRi15} are essentially the same result as Theorem~\ref{thm:General}. More related results on permutation groups can be found in \cite{CaFaGa14}. The motivation and framework is ostensibly different, but the case of finite sets is proved in \cite[Theorem~3.1]{BuGiTh09} using a version of Hall's theorem.

Theorem~\ref{thm:FinVect} is also known previously: the number $2n-1$ of alternations needed for a product of length $n$ (which can be obtained from Theorem~\ref{thm:FinVect}) can be found in \cite{BuGiTh09,BuGiTh14} (for a larger class of modules). It turns out that $2n-1$ is not optimal at least for finite fields: in \cite[Theorem~2.1]{CaFaGa14a} it is proved that for \FinVect\ over a finite field the optimal number of alternations for a product of length $n$ is $\lfloor 3n/2 \rfloor$.

It should be possible to extract, from the results of \cite{AhBuCi09}, a natural category where alternation diameter is defined but infinite for some objects.

We do not know of previous work on alternation diameter in categories that are less obviously computationally relevant, in particular the results on countably infinite sets and homeomorphisms groups are new to the best of our knowledge.

In Section~\ref{sec:Graphs} we discuss a known related result in graph theory.

\section{Bounded alternation diameter}
\label{sec:FiniteSets}

\subsection{Finite sets}

In this section we look at symmetric groups $\Sym(A \times B)$ for finite sets $A, B$, i.e. automorphism groups of product objects in the category $\FinSet$ of finite sets and functions. Permutations act from the left and compose right-to-left, $(g \circ h)(a) = g(h(a))$ and we also write $g \circ h = gh$.

The following theorem is a well-known corollary of Hall's marriage theorem (see Lemma~\ref{lem:MatrixHallInfinite} for a proof):

\begin{lemma}
\label{lem:MatrixHall}
Let $M$ be an $m$-by-$m$ matrix over $\N$ where all rows and columns sum to $n \geq 1$. Then $M \geq P$ cellwise, for some permutation matrix $P$.
\end{lemma}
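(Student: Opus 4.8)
The plan is to deduce the existence of $P$ from Hall's marriage theorem by encoding the support of $M$ as a bipartite graph. First I would build the bipartite graph $\Gamma$ on vertex classes $R = \{1,\dots,m\}$ (indexing rows) and $C = \{1,\dots,m\}$ (indexing columns), joining $i \in R$ to $j \in C$ by an edge exactly when $M_{ij} \geq 1$. A permutation matrix $P \leq M$ cellwise is the same data as a permutation $\sigma$ of $\{1,\dots,m\}$ with $M_{i,\sigma(i)} \geq 1$ for all $i$, that is, a matching of $\Gamma$ saturating $R$; since $|R| = |C| = m$ such a matching is automatically perfect. So it suffices to verify Hall's condition for the family of neighbourhoods of the row vertices.

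The key step is this verification, and it is a double-counting argument driven by the hypothesis that all line sums equal $n$. Fix any $S \subseteq R$ and let $T = N(S) \subseteq C$ be its neighbourhood, so that by definition $M_{ij} = 0$ whenever $i \in S$ and $j \notin T$. Summing $M$ over the rows in $S$ gives $\sum_{i \in S}\sum_{j} M_{ij} = n|S|$, and by the support condition this entire sum is concentrated in the columns of $T$. On the other hand, summing $M$ over the columns in $T$ gives $\sum_{j \in T}\sum_{i} M_{ij} = n|T|$, and the block indexed by $S \times T$ is only part of this total, the remaining terms (from rows outside $S$) being nonnegative. Hence $n|S| \leq n|T|$, and since $n \geq 1$ we may cancel to obtain $|S| \leq |T| = |N(S)|$, which is exactly Hall's condition.

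By Hall's marriage theorem, $\Gamma$ therefore admits a matching saturating $R$, which is perfect and defines the required permutation $\sigma$; the associated permutation matrix $P$ satisfies $M \geq P$ cellwise. I expect the only delicate point to be the bookkeeping in the counting step, specifically the observation that positivity of entries is precisely what forces all the weight of the $S$-rows into the $T$-columns, so that the two line-sum totals $n|S|$ and $n|T|$ become comparable; everything else is a direct appeal to Hall. (Alternatively, this finite statement can be recovered as a special case of the infinite matching result in Lemma~\ref{lem:MatrixHallInfinite}, but the direct finite argument above is self-contained.)
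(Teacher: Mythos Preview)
Your proof is correct and follows essentially the same approach as the paper: the paper defers the finite case to Lemma~\ref{lem:MatrixHallInfinite}, whose proof is exactly the bipartite-graph-on-the-support construction together with the same double-counting inequality $|L'| = \tfrac{1}{n}\sum_{a\in L',\,b\in N(L')} N_{a,b} \leq \tfrac{1}{n}\sum_{b\in N(L')}\sum_{a} N_{a,b} = |N(L')|$ to verify Hall's condition. Your self-contained finite presentation is a faithful specialization of that argument.
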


This naturally implies that every such matrix is a sum of $n$ permutation matrices, but we prefer to use the lemma directly.

\begin{definition}
Let $A$ and $B$ be finite sets, and let $G = \Sym(A \times B)$. Define the \emph{coLumn group} of $G$ as
\[ G_L = \{ g \in G \;|\; \forall (a,b) \in A \times B: \exists c \in B: g(a,b) = (c,b) \} \]
and define the \emph{Row group} $G_R$ symmetrically.
\end{definition}

\begin{theorem}
\label{thm:Two}
Let $A$ and $B$ be finite sets, and let $G = \Sym(A \times B)$. Then $G = G_R G_L G_R = G_L G_R G_L$.
\end{theorem}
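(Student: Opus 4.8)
The plan is to reduce the theorem to Lemma~\ref{lem:MatrixHall} via the matrix picture described in the introduction. Think of $A \times B$ as indexing an $m$-by-$n$ matrix (with $|A| = m$ rows, $|B| = n$ columns), where a permutation $g \in G$ is encoded by the matrix $M$ whose $(a,b)$ entry is $g(a,b) \in A \times B$. An element of $G_L$ permutes entries within each column independently, and an element of $G_R$ permutes within each row. The goal $G = G_R G_L G_R$ states that any such matrix can be sorted into the identity arrangement $X_{ab} = (a,b)$ by first permuting rows, then columns, then rows. By symmetry of the roles of $A$ and $B$ (transpose the matrix), establishing one of the two factorizations gives the other, so I would prove $G = G_R G_L G_R$.

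**The key reduction** concerns only the first (left-projection) coordinate of the entries. After I apply some $G_R$ on the right and then plan to apply a $G_L$, the $G_L$ step permutes freely within columns but cannot move entries between columns; so for the final $G_R$ to finish the sorting, I need that after the first two steps, each column contains each of the $m$ possible row-labels $a \in A$ exactly once (then a single $G_L$ can place label $a$ into row $a$ of every column, and a final $G_R$ fixes up the second coordinates within each row). **Here is where Hall enters.** Consider the first-coordinate content of the original matrix: form the $m$-by-$m$ count matrix $M$ where $M_{a,a'}$ is the number of cells currently holding an entry with first coordinate $a'$, summed over the rows whose target label is $a$ — more precisely, I tally, for each row $a$ and each value $a' \in A$, how many of the $n$ cells in row $a$ have first coordinate $a'$. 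Since $g$ is a bijection, every value $(a', b')$ appears exactly once, so each first-coordinate value $a'$ appears exactly $n$ times overall and in a way that makes all row sums and column sums of $M$ equal to $n$. **The first $G_R$ step** is chosen, using Lemma~\ref{lem:MatrixHall} repeatedly, to redistribute within each row so that afterwards every column contains each first-coordinate value exactly once; concretely, Lemma~\ref{lem:MatrixHall} gives a permutation matrix $P \le M$ cellwise, which I use to select, for each column, one representative first-coordinate value, peeling off permutation matrices one of $n$ batches to build the row-permutations.

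**The main obstacle** I expect is the bookkeeping of the two coordinates simultaneously: the entries are pairs $(a', b')$, and while the Hall argument controls the first coordinates, I must verify that the second coordinates never obstruct the construction. The cleanest way around this is to observe that the second-coordinate sorting is free once the first coordinates are arranged columnwise-bijectively: after the $G_L$ step puts first coordinate $a$ into physical row $a$ of every column, each row $a$ contains exactly the $n$ entries with first coordinate $a$, i.e.\ exactly the pairs $(a, b')$ for all $b' \in B$, each once; a final $G_R$ then permutes within each row to send $(a,b')$ to its home cell. So the genuine content is entirely in the first $G_R$ step, and the obstacle is purely in showing that iterating Lemma~\ref{lem:MatrixHall} lets me realize the required within-row redistribution as a single element of $G_R$ (one row-permutation per row), rather than several alternations. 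I would handle this by decomposing $M$ as a sum of $n$ permutation matrices and reading off, row by row, the bijection between the $n$ cells of that row and the $n$ columns, which assembles into one member of $G_R$.
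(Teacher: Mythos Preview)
Your proposal is correct and follows essentially the same route as the paper: encode the permutation as an $A$-by-$B$ matrix, pass to the $m$-by-$m$ count matrix of first-coordinate multiplicities (the paper calls it $N$), invoke Lemma~\ref{lem:MatrixHall} to peel off permutation matrices and thereby build a single $G_R$-move after which every column contains each $A$-value exactly once, then apply $G_L$ to sort columns and a final $G_R$ to fix the $B$-coordinates. The only cosmetic difference is that the paper fills one column at a time by induction while you phrase it as a full decomposition $N = P_1 + \cdots + P_n$; these are the same argument.
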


\begin{proof}
We prove $G = G_R G_L G_R$. The equality $G_R G_L G_R = G_L G_R G_L$ follows by symmetry.

A permutation $\pi$ of $A \times B$ can be seen as an $A \times B$-matrix $X$ containing each entry of $A \times B$ exactly once. (Formula: $X_{ab} = \pi(a,b)$.) We show that $\pi \in G_RG_LG_R$ by showing that $\pi G_RG_LG_R$ contains the identity map. In terms of the matrix $X$, precomposing $\pi$ by $(\pi')^{-1} \in G$, i.e. $\pi \mapsto \pi \circ (\pi')^{-1}$, corresponds to permuting the entries of the matrix $X$ by $\pi'$ (in the obvious ``forward'' direction). Thus, our task is to turn $X$ into the matrix $X_{ab} = (a, b)$ by first permuting the rows, then the columns, then the rows again.

Now, ignoring the $B$-component of every matrix entry, and supposing without loss of generality that $A = \{0,1,\ldots,m-1\}$ and $B = \{0,1,\ldots,n-1\}$, we obtain an $m$-by-$n$-matrix $M$ over $A$ where every element of $A$ occurs exactly $n$ times.

To such a matrix $M$, we associate an $m$-by-$m$-matrix $N$ defined as follows:
\[ N_{a,b} = |\{ j \;|\; M_{a,j} = b \}|. \]
Then every row of $N$ sums to $n$ because $M$ is an $m$-by-$n$ matrix, and every column of $N$ sums to $n$ because every $b \in A$ appears $n$ times in $M$. It follows from Lemma~\ref{lem:MatrixHall} that $N \geq P$ where $P$ is a permutation matrix.

We want to permute $M$ so that every column contains every symbol of $A$ exactly once. To do this, consider a row $a \in A$ of $M$, and let $a' \in A$ be the unique element such that $P_{a,a'} = 1$. Then row $a$ of $M$ contains at least one copy of $a'$. On each row, move such an $a'$ to the first column. Since $P$ is a permutation matrix, the elements $a'$ moved to the first column are distinct, so the net effect of this is that the first column of $M$ contains each element of $A$ exactly once. Considering now the $m$-by-$(n-1)$ matrix obtained from $M$ by deleting the first column, we observe that every element of $A$ appears exactly $n-1$ times. By induction, we obtain that $M$ can be permuted by an element of $G_R$ into a matrix where all columns contain each element of $A$ exactly once.

Now, apply an element of $G_L$ to sort each column so that the $a$th row of $M$ contains only $a$s.

Now, consider the action of this transformation on the original $A$-by-$B$ matrix $X$ with entries in $A \times B$. After the transformation (by an element of $G_L G_R$), the row $a$ contains only values of the form $(a, b)$ where $b \in B$. Since every entry $(a,b)$ appears exactly once in $X$, the set of values on row $a$ is then precisely $\{ (a, b) \;|\; b \in B \}$. We can now apply a final permutation in $G_R$ to permute all elements into their correct position, obtaining the matrix representing the identity permutation on $A \times B$.
\end{proof}

Let us make some additional observations. Consider an arbitrary product $X = A_1 \times A_2 \times \cdots \times A_k$. Write $G_i$ for the group of permutations of $X$ that only modify the $i$th component of their input. By induction on $k$, Theorem~\ref{thm:Two} shows the following:

\begin{theorem}
\label{thm:General}
Let $X = A_1 \times A_2 \times \cdots \times A_k$ where $A_i$ are finite sets and let $G_i \leq \Sym(X)$ be as above. Then 
\[ \Sym(X) = G_k G_{k-1} \cdots G_2 G_1 G_2 \cdots G_{k-1} G_k. \]
If each $A_i$ has at least two elements, then no sequence of less than $2k-1$ groups $G_i$ suffices.
\end{theorem}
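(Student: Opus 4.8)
The plan is to treat the two assertions separately: the decomposition $\Sym(X) = G_k \cdots G_1 \cdots G_k$ by induction on $k$ from Theorem~\ref{thm:Two}, and the optimality by a counting argument on the multiset of indices occurring in a factorization.

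For the decomposition, the base case $k=1$ is immediate since $\Sym(A_1) = G_1$ and $2k-1 = 1$. For the inductive step I would regroup $X = A_k \times B$ with $B = A_1 \times \cdots \times A_{k-1}$, and apply the form $\Sym(X) = G_L G_R G_L$ of Theorem~\ref{thm:Two}, taking the left coordinate to be $A_k$ (so $G_L = G_k$) and the right coordinate to be $B$. Writing $H = G_R$ for the group of permutations modifying only the $B$-coordinate, this gives $\Sym(X) = G_k\, H\, G_k$. The key observation is that $H$ is naturally isomorphic to $\Sym(B)^{A_k}$: an element of $H$ is a family $(\sigma_\alpha)_{\alpha \in A_k}$ of permutations of $B$, one per value of the $A_k$-coordinate. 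Applying the inductive hypothesis $\Sym(B) = G_{k-1}' \cdots G_1' \cdots G_{k-1}'$ (with $G_i'$ the analogous groups for the product $B$) to each $\sigma_\alpha$ separately, and then reassembling --- here using that an element of $G_i$ for $i < k$ may permute coordinate $i$ in a way depending on \emph{all} other coordinates, in particular on the $A_k$-coordinate $\alpha$ --- I obtain $H = G_{k-1} \cdots G_1 \cdots G_{k-1}$ as subgroups of $\Sym(X)$, with the same palindromic pattern. Substituting into $\Sym(X) = G_k H G_k$ yields the claimed factorization, whose length is $(2(k-1)-1) + 2 = 2k-1$. This ``parameter trick'' is the crux of the easy direction and is routine once set up.

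For optimality, suppose $\Sym(X) = G_{i_1} G_{i_2} \cdots G_{i_\ell}$ and let $c_j$ be the number of indices $s$ with $i_s = j$, so $\sum_j c_j = \ell$. I would establish two facts. First, every coordinate occurs: if $c_j = 0$ then every element of the product preserves the $j$-th coordinate, whereas (as $|A_j| \geq 2$) some permutation does not; hence $c_j \geq 1$ for all $j$. Second, at most one coordinate occurs exactly once. Granting these, at least $k-1$ coordinates satisfy $c_j \geq 2$ while the remaining coordinate contributes at least $1$, so $\ell = \sum_j c_j \geq 2(k-1) + 1 = 2k-1$, as required.

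The main obstacle is the second fact. The clean sub-case is when a coordinate $j$ occurring only once is the \emph{first-applied} factor, i.e. $i_\ell = j$: then the unique factor touching coordinate $j$ reads the original inputs and is never overwritten, so the final $j$-th coordinate has the form $\sigma_{x_{-j}}(x_j)$, where $x_{-j}$ denotes the remaining input coordinates; for each fixed value of $x_{-j}$ this is a bijection in $x_j$, and a permutation failing this (easy to write down once $|A_j| \geq 2$) is then not in the product. Applying the same argument to $g^{-1}$ handles a once-occurring coordinate that is the \emph{last}-applied factor. The difficulty is a coordinate that occurs once but strictly internally: factoring $g = A\, g_j\, B$ with $A, B$ built from the groups $G_i$, $i \neq j$ (which preserve the $j$-th coordinate), one can pass to the slices $\{x_j = \alpha\}$ and reduce to a statement about $\Sym(B)$; but removing one coordinate shortens the factorization by one while lowering the target bound by two, so this reduction by itself gives only $\ell \geq 2k-2$. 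Closing this gap requires tracking how the single factor $g_j$ permutes the slices against one another, and is exactly the content of the memoryless-computation lower bounds; indeed the present theorem coincides with \cite[Theorem~3]{BuGiTh14} and \cite[Theorem~2]{GaRi15}, and I would complete the argument along those lines.
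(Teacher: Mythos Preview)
Your proof of the decomposition $\Sym(X) = G_k \cdots G_1 \cdots G_k$ is correct and is essentially the paper's argument (the paper groups $A = A_1 \times \cdots \times A_{k-1}$, $B = A_k$ and uses $G_R G_L G_R$, but this is just the mirror of your choice).

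For the lower bound, however, you have a genuine gap, which you yourself identify: you do not prove that at most one coordinate can occur exactly once when the lone occurrence sits strictly in the interior of the word. Appealing to the memoryless-computation references is not a proof, and your attempted reduction by slicing out one coordinate loses exactly the factor of one you need.

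The paper closes this gap by a short direct argument that differs from your strategy in one essential way: instead of analyzing a \emph{single} once-occurring coordinate, it takes \emph{two} coordinates $a,b$ that each occur at most once, say at positions $j<k$ in the word, and exhibits a permutation not in the product. Writing $B_{mn}$ for the set of $x\in X$ with $a$-th coordinate $m$ and $b$-th coordinate $n$ (for $m,n\in\{0,1\}$), every factor except $G_a$ and $G_b$ preserves each $B_{mn}$ setwise. When the unique $G_a$-factor is applied, no element has yet moved between the $B_{mn}$; if the final permutation $\pi$ is to fix $B_{00}$, then $G_a$ cannot move anything out of $B_{00}$ (the $a$-coordinate is never touched again), hence by cardinality cannot move anything from $B_{10}$ into $B_{00}$ either. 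Thus every element originally in $B_{10}$ retains a nonzero $a$-coordinate forever and in particular cannot land in $B_{01}$. Any permutation that fixes $B_{00}$ but sends some element of $B_{10}$ into $B_{01}$ (which certainly exists when $|A_i|\ge 2$) therefore lies outside the product. The trick you were missing is precisely this: using the \emph{earlier} once-occurring coordinate $a$ to freeze a projection, and the \emph{later} once-occurring coordinate $b$ to witness the obstruction, so that the relative order of the two lone occurrences does all the work and no case analysis on ``first/last/internal'' is needed.
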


\begin{proof}
We first prove the formula for $\Sym(X)$. The case $k = 1$ is trivial, and $k = 2$ is Theorem~\ref{thm:Two}. Now, let $A = A_1 \times \cdots \times A_{k-1}$ and $B = A_k$. Then any permutation $\pi$ of $X$ is in $G_R G_L G_R$ where $G_R$ and $G_L$ are defined with respect to the decomposition $X = A \times B$. Let $\pi = \pi_3 \circ \pi_2 \circ \pi_1$ be the corresponding decomposition. Then $\pi_1$ and $\pi_3$ are in $G_k$ because they do not modify the $A$-component of their input.

We can write $\pi_2$ as
$ \pi_2 = \prod_{b \in B} \pi'_b $ 
where each $\pi_b'$ is a permutation of $A$ that modifies the $A$-component only if the $B$-component is equal to $b$. Each permutation $\pi'_b$ is in $G_{k-1}' \cdots G_2' G_1' G_2' \cdots G_{k-1}'$ by induction, where $G'_i$ are the groups corresponding to components of the product $A$. Thus
\[ \pi_2 = \prod_{b \in B} \pi'_b \in (G_{k-1} \cdots G_2 G_1 G_2 \cdots G_{k-1})^\ell, \]
and we can reorder the product to get $\pi_2 \in G_k G_{k-1} \cdots G_2 G_1 G_2 \cdots G_{k-1} G_k$ since the permutations corresponding to distinct $\pi'_b$ commute (as they have disjoint supports).

For the second claim, we show a stronger result: We cannot have
\[ \Sym(X) = G_{i_\ell} \cdots G_{i_2} \cdot G_{i_1} \]
for any sequence where there are at least two indices that occur at most once. Suppose the contrary, and let $i_j = a$, $i_k = b$ with $j < k$, and $a$ and $b$ each occur only once. Suppose that $A_i = \{0,1,...,|A_i|-1\}$.

For $m,n \in \{0,1\}$ write $B_{mn}$ for the set of elements of $X$ where the $A_a$-coordinate is equal to $m$ and the $A_b$-coordinate is equal to $n$. We claim that if $\pi$ is in $G_{i_\ell} \cdots G_{i_2} \cdot G_{i_1}$ and fixes $B_{00}$, then it maps no elements of $B_{10}$ into $B_{01}$, which clearly proves the claim. To see this, observe that all of the groups $G_{i_h}$ except $G_{i_j}$ and $G_{i_k}$ leave the sets $B_{mn}$ invariant. Thus, when $G_a$ gets its turn, elements of $B_{00}$ and $B_{10}$ have not yet been moved. Since $\pi$ fixes $B_{00}$, the $G_a$-permutation cannot move any elements away from $B_{00}$ (since after this step, their $A_a$-coordinate will no longer change). But then elements of $B_{10}$ cannot be moved into $B_{00}$ by $G_a$ since $|B_{00}| = |B_{10}|$, so after applying $G_a$, elements of $B_{10}$ still have nonzero $A_a$-coordinate, which will no longer change. Thus $\pi$ cannot move them into $B_{01}$.
\end{proof}

\begin{corollary}
The category $\FinSet$ has alternation diameter $3$.
\end{corollary}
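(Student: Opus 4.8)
The plan is to read both bounds off the theorems already established, since the corollary asserts precisely that the supremum of alternation diameters over all products in $\FinSet$ equals $3$. Recall that the alternation diameter of a product object $A \times B$ with automorphism group $G$ is the diameter of $G$ with respect to the generating set $G_L \cup G_R$. Because $G_L$ and $G_R$ are subgroups, any word in these generators collapses consecutive factors drawn from the same side, so this diameter coincides with the minimal number of alternating $G_L$/$G_R$ blocks needed to express an arbitrary element of $G$. Confirming that these two descriptions agree is the only definitional point to check, and it is immediate from $G_L, G_R$ being subgroups.

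First I would establish the upper bound. Theorem~\ref{thm:Two} gives $G = G_L G_R G_L$ for every pair of finite sets $A, B$, so every automorphism of $A \times B$ is a product of at most three factors alternating between the two subgroups. Hence every product object in $\FinSet$ has alternation diameter at most $3$, and therefore the category has alternation diameter at most $3$.

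For the lower bound I would exhibit a single product witnessing that $3$ is attained. Take $A = B = \{0,1\}$, or indeed any two sets of size at least two. The second part of Theorem~\ref{thm:General}, specialized to $k = 2$, shows that $\Sym(A \times B)$ is not a product of fewer than $2k - 1 = 3$ of the groups $G_1 = G_L$, $G_2 = G_R$: in any alternating word of length at most two both indices $1$ and $2$ occur at most once, so the stronger statement proved there applies and shows that no such word realizes all of $\Sym(A \times B)$. Thus this product has alternation diameter at least $3$, so the supremum over all products, which is the alternation diameter of the category, equals exactly $3$.

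No real obstacle remains at this stage: the substantive work has already been done, namely the reduction to Hall's marriage theorem that yields the factorization $G = G_R G_L G_R$ in Theorem~\ref{thm:Two}, and the invariance argument on the blocks $B_{mn}$ that yields the lower bound in Theorem~\ref{thm:General}. The corollary is merely the assembly of these two facts together with the observation that the category's alternation diameter is the least upper bound over products, both of whose required values are pinned down above.
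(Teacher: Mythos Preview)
Your argument is correct and matches the paper's intent: the corollary is stated without proof in the paper, being an immediate consequence of Theorem~\ref{thm:Two} for the upper bound and the second claim of Theorem~\ref{thm:General} (specialized to $k=2$) for the lower bound, exactly as you assemble it.
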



One can extract a full characterization of $G_{RL}$ from the proof of Theorem~\ref{thm:Two}.

\begin{lemma}
\label{lem:RLCharacterization}
Let $A$ and $B$ be sets, and let $G = \Sym(A \times B)$.
The set $G_R G_L$ contains precisely those permutations $\pi$ such that for $\proj_A : A \times B \to A$ the natural projection map, for all $b \in B$, the map $a \mapsto \proj_A(\pi(a, b)) : A \to A$ is bijective.
\end{lemma}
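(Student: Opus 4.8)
The plan is to give a direct, coordinate-by-coordinate description of a permutation in $G_R G_L$ and read off the stated condition; no appeal to Hall's theorem is needed here, since the claim concerns only two alternations rather than three. First I would fix notation for the two factor groups: any $g_L \in G_L$ is determined by a family $(\sigma_b)_{b \in B}$ of permutations of $A$ via $g_L(a,b) = (\sigma_b(a), b)$, and any $g_R \in G_R$ by a family $(\tau_a)_{a \in A}$ of permutations of $B$ via $g_R(a,b) = (a, \tau_a(b))$. The defining feature I will lean on repeatedly is that every element of $G_R$ preserves the left projection, that is, $\proj_A \circ g_R = \proj_A$.

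For the necessity direction, suppose $\pi = g_R \circ g_L \in G_R G_L$. Then $\proj_A \circ \pi = \proj_A \circ g_R \circ g_L = \proj_A \circ g_L$, so for fixed $b$ the map $a \mapsto \proj_A(\pi(a,b))$ equals $a \mapsto \sigma_b(a)$, which is a bijection of $A$ by construction. This gives the ``only if'' part.

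For sufficiency, suppose $\pi$ satisfies the column condition. I would define $\sigma_b(a) = \proj_A(\pi(a,b))$; by hypothesis each $\sigma_b$ is a bijection of $A$, so setting $g_L(a,b) = (\sigma_b(a), b)$ produces a genuine element of $G_L$. The candidate for the row factor is then $g_R := \pi \circ g_L^{-1}$, which is automatically a permutation of $A \times B$; the only thing to verify is that it lies in $G_R$, i.e. that it fixes the left coordinate. Computing $\proj_A(g_R(a,b)) = \proj_A(\pi(\sigma_b^{-1}(a), b)) = \sigma_b(\sigma_b^{-1}(a)) = a$ confirms this, and hence $\pi = g_R \circ g_L \in G_R G_L$.

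There is no serious obstacle here: the whole argument is the observation that the left projection of $\pi$ depends only on its $G_L$-factor, so the column maps of $\pi$ must themselves be bijections, and conversely those bijections are exactly the data of the $G_L$-factor that one can peel off. The only points requiring care are the order of composition (we apply $g_L$ first) and checking that the leftover $g_R$ is a bona fide permutation fixing the first coordinate; both are immediate. I would also note that the argument uses no finiteness of $A$ or $B$, which is why the lemma is stated for arbitrary sets and remains available for the countable case treated later.
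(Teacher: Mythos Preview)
Your proof is correct and, at the level of ideas, agrees with the paper: the key observation in both is that an element of $G_R$ leaves $\proj_A$ unchanged, so the column maps $a \mapsto \proj_A(\pi(a,b))$ are forced to coincide with the $G_L$-factor. The presentations differ slightly. For sufficiency the paper points back to the last two steps of the proof of Theorem~\ref{thm:Two} (sort each column with $G_L$, then finish with $G_R$), whereas you write down the factorization $g_L,\,g_R=\pi g_L^{-1}$ directly; these are the same construction, yours just being self-contained rather than cross-referencing. For necessity the paper argues in matrix language, handling injectivity and surjectivity of the column map separately by contradiction, while your one-line computation $\proj_A\circ\pi=\proj_A\circ g_L$ gets both at once. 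Your remark that nothing uses finiteness is also made in the paper and is indeed the point of stating the lemma for arbitrary sets.
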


\begin{proof}
Note that the set $G_R G_L$ contains precisely the permutations that can be mapped to the identity by precomposing first with an element of $G_L$, and then an element of $G_R$. In matrix form, they are the $A$-by-$B$ matrices $X$ over $A \times B$ that can be turned into the matrix $X_{ab} = (a, b)$ by first applying a permutation of columns (from $G_L$) and then a permutation of rows (from $G_R$).\footnote{To readers experiencing chiral confusion, we write some formulas: if we ``first'' apply $G_L$ and ``then'' $G_R$, to a matrix $X$, then formulaically we obtain $G_R \cdot (G_L \cdot X) = (G_L \cdot X) \circ G_R^{-1} = X \circ G_L^{-1} \circ G_R^{-1} = X \circ (G_R G_L)^{-1}$, which contains the identity matrix if and only if $X \in G_R G_L$.}

For sufficiency, observe that the above property is precisely the one that holds after the first application of $G_R$ in the proof of Theorem~\ref{thm:Two} -- in terms of the matrix $M$, it states that every column of $M$ contains every symbol of $A$ exactly once. The two following steps in that proof perform any permutation of $G_R G_L$ using only this property, and do not use the finiteness of $A$ or $B$.

For necessity, write $\pi$ in matrix form, $X_{ab} = \pi(a, b)$, and suppose that for some $\pi' \in G_L, \pi'' \in G_R$, we can turn $X_{ab}$ into the matrix $X_{ab} = (a, b)$ by first applying $\pi'$ and then $\pi''$. First, $a \mapsto \proj_A(\pi(a, b)) : A \to A$ must be injective: otherwise, some column $b$ of $X$ contains both $(a, c)$ and $(a, c')$ for some $c \neq c' \in B$. This still holds after applying $\pi'$, so at the time of the final application of $\pi''$, $(a, c)$ and $(a, c')$ are on the same column, thus cannot both be on the correct row $a$. Second, $a \mapsto \proj_A(\pi(a, b)) : A \to A$ must be surjective: if there exists $c \in A$ and $b \in B$ such that the $b$th column of $X$ does not contain any $(c, b')$, $b' \in B$, then the same is true after applying $\pi'$. In particular the $b$th column will necessarily contain some value $(c', b')$, $c' \neq c$, on the $c$th row, after the application of $\pi'$.
\end{proof}

One can give precise formulas for the sizes of each of the sets obtained by applying $G_L, G_R$ in various orders.

\begin{definition}
\label{def:Gw}
When $G = \Aut(A \times B)$ is an automorphism group, for $w \in \{L, R\}^*$, write $G_w = G_{w_1} G_{w_2} \cdots G_{w_{|w|}}$.
\end{definition}

\begin{theorem}
Let $A, B$ be finite sets, $|A| = m, |B| = n$. For any $w$, $G_w$ is equal to one of the sets in $\{1, G_L, G_R, G_{LR}, G_{RL}, G_{LRL}\}$ and
\[ |G_{LRL}| = mn! \]
\[ |G_{LR}| = |G_{RL}| = m!^n n!^m \]
\end{theorem}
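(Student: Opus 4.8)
The plan is to separate the statement into two independent parts: a combinatorial collapse showing that every $G_w$ coincides with one of the six listed sets, and the three cardinality formulas. First I would reduce an arbitrary word $w \in \{L,R\}^*$ to a canonical form. Since $G_L$ and $G_R$ are subgroups, each contains $1$ and is closed under multiplication, so $G_L G_L = G_L$ and $G_R G_R = G_R$; hence any maximal run of equal letters in $w$ collapses to a single letter, and $G_w = G_{w'}$ for the alternating reduction $w'$ of $w$. The alternating words are $\epsilon, L, R, LR, RL, LRL, RLR, \ldots$. By Theorem~\ref{thm:Two}, $G_{LRL} = G_L G_R G_L = G$ and $G_{RLR} = G_R G_L G_R = G$, so both length-three alternating words already give the full group; and since $G$ is a group containing $G_L$ and $G_R$, appending any further letter leaves it unchanged, as $G\, G_L = G\, G_R = G$. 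Reading off the remaining short words (with $G_\epsilon = \{1\}$) yields exactly $\{1, G_L, G_R, G_{LR}, G_{RL}, G_{LRL}\}$, proving the first assertion.

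For the cardinalities I would work in the matrix picture, with $A$ indexing the $m$ rows and $B$ the $n$ columns. An element of $G_L$ chooses an independent permutation of the $m$ entries in each of the $n$ columns, so $|G_L| = (m!)^n$, and symmetrically $|G_R| = (n!)^m$. Any element of $G_L \cap G_R$ fixes both coordinates of every $(a,b)$ and is therefore the identity, so $G_L \cap G_R = \{1\}$. Consequently the multiplication map $G_L \times G_R \to G_{LR}$, $(g,h) \mapsto gh$, is a bijection: it is surjective by definition of the product set, and its fibers are cosets of $G_L \cap G_R = \{1\}$ (equivalently, one invokes the identity $|HK| = |H|\,|K|/|H \cap K|$). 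This gives $|G_{LR}| = (m!)^n (n!)^m$, and the same argument with the roles of $L$ and $R$ interchanged produces the equal value $|G_{RL}| = (n!)^m (m!)^n$. Finally $|G_{LRL}| = |G| = |\Sym(A \times B)| = (mn)!$, which is the meaning of the displayed formula.

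The only point demanding genuine care is the middle cardinality. The product set $G_L G_R$ is not in general a subgroup, so one cannot argue by closure; the entire content is that each element of $G_{LR}$ factors uniquely as $g = g_L g_R$ with $g_L \in G_L$, $g_R \in G_R$, and this uniqueness is precisely the triviality of $G_L \cap G_R$. Once that is established the counts are pure bookkeeping, and the first part is a mechanical consequence of the subgroup property together with Theorem~\ref{thm:Two}.
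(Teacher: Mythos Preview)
Your proof is correct and follows essentially the same approach as the paper: collapse repeated letters using the subgroup property, invoke Theorem~\ref{thm:Two} to identify $G_{LRL}=G_{RLR}=G$, and compute $|G_{LR}|$ via $|G_L G_R| = |G_L|\,|G_R|/|G_L\cap G_R|$ with $G_L\cap G_R=\{1\}$. If anything, you spell out the word-reduction step more carefully than the paper does.
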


\begin{proof}
Since $G = G_{LRL}$ by Theorem~\ref{thm:Two} and $G_L = G_LG_L$ and $G_R = G_RG_R$ because $G_L$ and $G_R$ are groups, the claim about $G_w$ is true. The first formula comes from the fact that $G_{LRL} = \Sym(A \times B)$. By definition, $|G_{L} \cap G_{R}| = 1$, so since $G_L$ and $G_R$ are subgroups,
\[ |G_{LR}| = |G_L G_R| = \frac{|G_L||G_R|}{|G_L \cap G_R|} = m!^n n!^m, \]
where $|G_L| = m!^n$ since we choose an independent permutation of each of the $n$ columns of size $m$. The formula $|G_{LR}| = |G_{RL}|$ follows by symmetry.
\end{proof}

The above theorem shows that at least three alternations are needed also in the stronger sense that $G \neq G_LG_R \cup G_RG_L$ for $|A|, |B|$ large enough. This is because $|G_{RL} \cup G_{RL}| \leq 2m!n!$ is dwarfed by $mn!$ by a straightforward application of Stirling's formula.

We note that Lemma~\ref{lem:RLCharacterization} gives a characterization of permutations in $G_{LR} \cap G_{RL}$. We have not investigated whether there is a simple formula for the cardinality $|G_{LR} \cap G_{RL}|$ in terms of $m$ and $n$.

From Theorem~\ref{thm:General} we immediately obtain an alternation diameter result for finite-support permutations on infinite sets. Write $\Sym_0(X)$ for the group of finite-support permutations of a set $X$. Define $G_i \leq \Sym_0(X)$ as before, requiring that only the $i$th coordinates of inputs are modified by elements of $G_i$.

\begin{corollary}
Let $X = A_1 \times A_2 \times \cdots \times A_k$ where $A_i$ are arbitrary sets and let $G_i \leq \Sym_0(X)$ be as above. Then 
\[ \Sym_0(X) = G_k G_{k-1} \cdots G_2 G_1 G_2 \cdots G_{k-1} G_k. \]
If each $A_i$ has at least two elements, then no sequence of less than $2k-1$ groups $G_i$ suffices.
\end{corollary}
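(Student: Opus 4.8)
The plan is to reduce the equality to the finite case, Theorem~\ref{thm:General}, by restricting an arbitrary finite-support permutation to a finite invariant sub-box, and to obtain the lower bound by observing that the obstruction constructed in the proof of Theorem~\ref{thm:General} is already witnessed by a permutation of finite support.

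For the inclusion $\Sym_0(X) \subseteq G_k G_{k-1} \cdots G_1 \cdots G_{k-1} G_k$, I would take $\pi \in \Sym_0(X)$ with support $S = \{x : \pi(x) \neq x\}$, which is finite and $\pi$-invariant. Letting $F_i \subseteq A_i$ be the (finite) projection of $S$ to the $i$th coordinate, the box $F = F_1 \times \cdots \times F_k$ is again $\pi$-invariant: a point of $F \setminus S$ is fixed, and a point of $S$ is sent into $S \subseteq F$. The restriction $\pi|_F$ is a permutation of the finite product $F$, so Theorem~\ref{thm:General} gives $\pi|_F = g_k \cdots g_1 \cdots g_k$ with each $g_i$ a permutation of $F$ modifying only the $i$th coordinate. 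Extending each $g_i$ by the identity on $X \setminus F$ yields an element of $G_i \leq \Sym_0(X)$ of support contained in $F$; since each extended factor fixes $X \setminus F$ pointwise and preserves $F$ setwise, their composition agrees with $\pi|_F$ on $F$ and with the identity off $F$, hence equals $\pi$. The reverse inclusion is immediate, as each $G_i \leq \Sym_0(X)$ and $\Sym_0(X)$ is a group.

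For the lower bound I would reuse the stronger statement proved for Theorem~\ref{thm:General}: in any product in which two coordinates $a, b$ each occur at most once, every element that fixes $B_{00}$ setwise maps no point of $B_{10}$ into $B_{01}$, where the $B_{mn}$ are the slices from that proof. The sole appeal to finiteness there is the count $|B_{00}| = |B_{10}|$; I would replace it by noting that the factor $G_a$ acts as a permutation of $A_a$ on each line in the $A_a$-direction, and that fixing the $a$-coordinate $0$ on every such line meeting $B_{00}$ forces it to permute the nonzero $a$-values among themselves, so it cannot carry a point of $B_{10}$ to $a$-coordinate $0$. The witnessing counterexample, a transposition exchanging a point of $B_{10}$ with a point of $B_{01}$, has finite support, so it lies in $\Sym_0(X)$ but in no such product. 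A word of length less than $2k-1$ either has two coordinates occurring at most once, in which case this applies, or omits some coordinate entirely, in which case a finite-support transposition moving that coordinate is missed; either way $\Sym_0(X)$ is not exhausted. The one step that does not transcribe verbatim from the finite proof is this cardinality argument, and the line-wise analysis of $G_a$ is the device I expect to need in order to handle it; everything else is routine once the invariance of $F$ is checked.
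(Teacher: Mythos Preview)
Your argument is correct and matches the paper's: restrict to a finite sub-box and invoke Theorem~\ref{thm:General} for the equality, then reuse that theorem's obstruction for the lower bound. The paper's terse ``for necessity its proof'' presumably means restricting a hypothetical short factorization to a finite box containing all the factors' supports (where $|B_{00}|=|B_{10}|$ then holds literally), whereas you re-run the obstruction directly in $\Sym_0(X)$ with the line-wise fix; both routes are valid, and your observation that the cardinality step is the only place finiteness is used is accurate.
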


\begin{proof}
Every permutation has finite support, thus finite projection of the support on the sets $A_i$. Pick suitable finite initial segments of the sets, and for sufficiency apply Theorem~\ref{thm:General}, and for necessity its proof.
\end{proof}

\subsection{Countably infinite sets}
\label{sec:Countable}

We now look at permutations with countably infinite support.

We recall a version of Hall's theorem for infinite sets and include a proof sketch (see e.g. \cite{Di05,CeCo10} for details).

Here, \emph{graphs} are undirected without self-loops, but may have multiple edges between two vertices. A graph is \emph{bipartite} if its vertices can be partitioned into two nonempty sets $L, R$ in such a way that no edge goes between two vertices in $L$ or between two vertices in $R$. Write $N(A)$ for the open neighborhood of a subset $A$ of a graph, i.e. the set of vertices connected to a vertex in $A$ by an edge. A graph is \emph{locally finite} if every vertex has finite degree, i.e.\ $|N(\{a\})| < \infty$ for all vertices $a$. Write $A \Subset B$ for $A$ a finite subset of $B$. A \emph{matching} in a bipartite graph, with a fixed bipartition $L, R$ of the vertices, is a $1$-to-$1$ correspondence that matches a subset of the elements of $L$ injectively to a subset of $R$. We write matchings as partial functions $\alpha : {\subset L} \to R$ 

\begin{lemma}
\label{lem:HallInfinite}
Let $G$ be a locally finite bipartite graph where for each finite set of vertices $A$, $|N(A)| \geq |A|$. Then $G$ admits a perfect matching.
\end{lemma}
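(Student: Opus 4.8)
The plan is to bootstrap from the finite marriage theorem (the classical Hall theorem, which I assume) to the infinite setting by a compactness argument, and then to upgrade a one-sided saturation to a genuine perfect matching. First I would reduce to the countable case: since $G$ is locally finite, each of its connected components is countable (breadth-first search from any vertex reaches only countably many vertices), and the hypothesis $|N(A)| \geq |A|$ restricted to a component $C$ still holds, because every neighbor of a finite $A \subseteq C$ already lies in $C$. A perfect matching of $G$ is obtained as the union of perfect matchings of the components, so I may assume $G$ is connected, hence that $L$ and $R$ are countable.

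Next I would produce a matching saturating $L$ via K\"onig's lemma. Enumerate $L = \{l_1, l_2, \ldots\}$ and set $L_n = \{l_1, \ldots, l_n\}$. By local finiteness, $N(L_n)$ is finite, so the finite Hall theorem applied to the (finite) subgraph induced by $L_n \cup N(L_n)$ produces a matching saturating $L_n$; moreover, since each $l_i$ has finitely many neighbors, there are only finitely many such matchings. I would then form a tree whose level-$n$ nodes are the matchings saturating $L_n$, with $\beta$ a child of $\alpha$ when $\beta$ restricts to $\alpha$ on $L_n$. This tree is finitely branching and has a nonempty level for every $n$, so K\"onig's lemma yields an infinite branch $\alpha_1 \subseteq \alpha_2 \subseteq \cdots$ whose union $M_L$ is a matching saturating all of $L$. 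Because the Hall hypothesis is stated for \emph{all} finite vertex sets, it holds in particular for finite subsets of $R$, and the symmetric argument produces a matching $M_R$ saturating all of $R$.

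Finally I would merge $M_L$ and $M_R$ into a perfect matching. Keeping every edge of $M_L \cap M_R$, I consider the symmetric difference $H = M_L \triangle M_R$, in which every vertex has degree at most two, so that $H$ is a disjoint union of (even) cycles and paths. The key point is that $H$ has \emph{no} finite path component: an endpoint $v$ of such a path has degree one, hence is matched by exactly one of $M_L, M_R$, forcing its unique $H$-edge to lie in the matching saturating $v$'s side; tracking the forced alternation of $M_L$- and $M_R$-edges along the path against the bipartite parity of the endpoints produces a contradiction at the second endpoint, in every case for the sides of the two endpoints. Hence each component of $H$ is an even cycle or an infinite path, on each of which alternate edges can be chosen so as to cover every vertex (for a ray $v_0 - v_1 - \cdots$ one starts with the edge $v_0 v_1$). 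Together with $M_L \cap M_R$ this covers every vertex of $G$, giving a perfect matching. I expect the compactness step to be the main obstacle: one must guarantee that the limit of the finite matchings is coherent, which is exactly what local finiteness buys through the finite branching of the K\"onig tree. The merging step is then a clean parity argument.
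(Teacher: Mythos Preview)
Your argument is correct. The reduction to connected (hence countable) components is sound, the K\"onig's-lemma tree of finite matchings is finitely branching because each $l_i$ has only finitely many neighbors, and your parity analysis ruling out finite path components in $M_L \triangle M_R$ checks out in all four cases for the sides of the endpoints.

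The route differs somewhat from the paper's. The paper works directly in the compact space $\{0,1\}^E$ of edge-subsets and extracts a matching that is maximal with respect to the set of matched left vertices; it then argues via alternating paths (and K\"onig's lemma for an infinite such path) that an unmatched left vertex would allow an augmentation, contradicting maximality. You instead invoke the \emph{finite} Hall theorem on each $L_n$ and thread the resulting matchings together by K\"onig's lemma on a tree of partial matchings. Both are compactness arguments at heart, but yours trades the topological maximality/augmenting-path step for an explicit dependence on finite Hall. For the upgrade to a two-sided perfect matching, the paper sketches the same augmenting-path idea on the right and mentions the Cantor--Schr\"oder--Bernstein merge only as an alternative; you carry the latter out in full. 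Your version is slightly longer but more self-contained, since it never needs the augmenting-path machinery.
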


\begin{proof}
Let us call the vertices ``left'' or ``right'' depending on which side they are on, $L$ and $R$ as sets. The set of subsets of the edge set has a natural compact topology, namely the product topology on $\{0,1\}^E$ where $E$ is the set of edges and $1$ means the edge is included in the set. Matchings form a closed subset of this space. Since the graph is locally finite, for each vertex $v$ the set of matchings where $v$ is matched is clopen, thus compact. Thus, there exists a matching $\alpha : {\subset L} \to R$ where a maximal set of left vertices is (injectively) matched with some vertex on the right, and for this maximal set, a maximal set of right vertices is matched.

If some left vertex $v$ is not matched in $\alpha$, let $C$ be the set of those vertices (left or right) and edges which are reachable by a path starting from $v$ where every second edge is part of the maximal matching $\alpha$. If $C$ is infinite, by K\"onig's lemma we can find an infinite path, and swapping the edges that are in $\alpha$ with those not in $\alpha$ on this path, we add $v$ to $C$ but remove no vertex from $C$, so $C$ was not maximal. If $C$ is finite, and some $u \in C \cap R$ is not matched, then we can take a path from $v$ to $u$ and again swap the matching edges with non-matching edges to add $v, u$ to $C$ without removing any matched left vertices. If $C$ is finite and all elements of $C \cap R$ are matched, then $|C \cap L| > |C \cap R|$ (because $\alpha$ gives an injection from $C \cap R$ into $(C \cap L) \setminus \{v\}$), a contradiction since all edges from $C \cap L$ are included in $C$ and thus $|C \cap R| = |N(C \cap L)| \geq |C \cap L|$.

We conclude that $t\alpha : L \to R$, i.e. $\alpha$ matches every left vertex with a right vertex. Suppose then that $\alpha$ is not surjective. Then perform the above argument with the roles of left and right reversed, and observe that we also never unmatched a matched right vertex when modifying our matching. Alternatively, one can construct a left-surjective and right-surjective matching separately and apply the Cantor-–Schr\"oder–-Bernstein argument.
\end{proof}

In the following, we use the matrix terminology, though indexing by infinite sets. The meaning should be clear.

\begin{lemma}
\label{lem:MatrixHallInfinite}
Let $A$ be any set and $N$ any $A$-by-$A$ matrix over $\N$. If every row and column sums to $n \geq 1$ (in particular, cofinitely many entries have value $0$ on every row and column), then $N \geq P$ for some permutation matrix $P$.
\end{lemma}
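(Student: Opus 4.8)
The plan is to encode the matrix $N$ as a locally finite bipartite graph and then read off the desired permutation matrix from a perfect matching supplied by Lemma~\ref{lem:HallInfinite}. Concretely, I would take as left vertices a copy $L$ of $A$ (thought of as rows) and as right vertices a copy $R$ of $A$ (thought of as columns), and place a single edge between the row-vertex $a$ and the column-vertex $b$ precisely when $N_{a,b} \geq 1$. A perfect matching in this graph is exactly a bijection $\sigma : A \to A$ with $N_{a,\sigma(a)} \geq 1$ for every $a$; setting $P_{a,b} = 1$ iff $b = \sigma(a)$ (and $0$ otherwise) then produces a permutation matrix with $P \leq N$ cellwise, which is what we want.

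First I would check local finiteness. Since every row of $N$ sums to the finite value $n$ and the entries are natural numbers, each row has at most $n$ nonzero entries, so each row-vertex has degree at most $n$; the same holds for column-vertices. This is exactly the content of the parenthetical remark that cofinitely many entries vanish on each row and column.

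Next I would verify Hall's condition $|N(S)| \geq |S|$ for a finite set $S$ of vertices, where $N(S)$ denotes the (graph) neighborhood. Treating $S$ as a finite set of rows, a double count gives
\[ n\,|S| = \sum_{a \in S} \sum_{b \in A} N_{a,b} = \sum_{b \in N(S)} \sum_{a \in S} N_{a,b} \leq \sum_{b \in N(S)} n = n\,|N(S)|, \]
where the first equality is the row-sum condition, the second holds because $N_{a,b} = 0$ whenever $a \in S$ and $b \notin N(S)$, and the inequality uses that each full column of $N$ sums to $n$, so its partial sum over $S$ is at most $n$. As $n \geq 1$, dividing by $n$ yields $|S| \leq |N(S)|$. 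The construction is completely symmetric in rows and columns (all row and column sums equal $n$), so the identical estimate holds for finite sets of column-vertices as well.

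With local finiteness and the two-sided Hall condition in hand, Lemma~\ref{lem:HallInfinite} delivers a perfect matching, i.e.\ a bijection $\sigma : A \to A$ as above, which finishes the argument; note that the finite case $|A| < \infty$ recovers Lemma~\ref{lem:MatrixHall} directly. I expect the only genuinely delicate points to be the bookkeeping in the double-counting step and confirming that the ``perfect matching'' produced by Lemma~\ref{lem:HallInfinite} is a true bijection between the two sides rather than merely a left-total injection; the latter is guaranteed by the symmetric Hall condition together with the role-reversal argument already carried out in the proof of that lemma.
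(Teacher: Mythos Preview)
Your proposal is correct and follows essentially the same route as the paper: build the bipartite graph on two copies of $A$ with an edge from $a$ to $b$ whenever $N_{a,b}>0$, verify local finiteness from the bounded row/column sums, check the Hall condition by the same double-counting inequality, and apply Lemma~\ref{lem:HallInfinite} to extract a perfect matching, i.e.\ a permutation matrix $P\leq N$. The only cosmetic difference is that you spell out the bijection $\sigma$ explicitly and comment on the two-sidedness of the matching, which the paper leaves implicit.
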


\begin{proof}
Construct the bipartite graph $G$ with a copy of $A$ ``on the left'' and another copy of $A$ ``on the right''. Include an edge from left-$a$ to right-$b$ if $N_{a, b} > 0$. Then this graph satisfies the assumptions of the previous lemma: Clearly it is locally finite and bipartite. Consider any finite set of left vertices $L' \Subset L$. We have
\begin{align*}
|L'| &= \frac{1}{n} \sum_{a \in L', b \in N(L')} N_{a, b} \\
&\leq \frac{1}{n} \sum_{b \in N(L')} \sum_{a \in L} N_{a, b} = |N(L')|.
\end{align*}
Similarly, for any $R' \Subset R$ we have $|N(R')| \geq |R'|$. The previous lemma gives a perfect matching, i.e. a permutation matrix $P \leq N$.
\end{proof}

\begin{theorem}
\label{thm:CountableCase}
Let $A,B$ be sets. If $|A|, |B| \leq \aleph_0$, then
\[ \Sym(A \times B) = G_{LRLR} \cup G_{RLRL}. \]
If both $A$ and $B$ are infinite, then $\Sym(A \times B) \neq G_{LRLR}$ and $\Sym(A \times B) \neq G_{RLRL}$. If $B$ is infinite and $|A| \geq 2$, then $\Sym(A \times B) \neq G_{RLR}$.
If $B$ is finite, then $\Sym(A \times B) = G_{RLR}$.
\end{theorem}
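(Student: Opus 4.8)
The plan rests on upgrading Lemma~\ref{lem:RLCharacterization} to a description of the longer products $G_{RLR}$ and $G_{RLRL}$ purely in terms of the \emph{first-coordinate matrix} $M = M(\pi)$ of $\pi$, given by $M_{a,b} = \proj_A(\pi(a,b))$. Call a rearrangement of the entries of $M$ within each row a \emph{row shuffle} (these are exactly the effects of precomposing by elements of $G_R$) and within each column a \emph{column shuffle} (the effects of $G_L$). Since Lemma~\ref{lem:RLCharacterization} characterizes $G_RG_L$ by the condition that every column of $M$ is a bijection of $A$, peeling off the last generator one at a time gives: $\pi \in G_{RLR}$ iff some row shuffle of $M$ makes every column a bijection of $A$; and $\pi \in G_{RLRL}$ iff some column shuffle $M'$ of $M$ can then be row-shuffled to make every column a bijection. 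Writing $N(M)_{a,c} = |\{b : M_{a,b}=c\}|$ for the \emph{count matrix}, the property ``row-shuffleable to bijective columns'' is exactly ``$N(M)$ is a sum of $|B|$ permutation matrices'', since a row shuffle preserves $N(M)$ while the target amounts to choosing bijections $\sigma_b \colon A \to A$ with $|\{b:\sigma_b(a)=c\}| = N(M)_{a,c}$. Thus $\pi \in G_{RLR}$ iff $N(M)$ decomposes into $|B|$ permutation matrices, and $\pi \in G_{RLRL}$ iff $N(M')$ does so for some column shuffle $M'$ of $M$. No condition on the second coordinates survives, because once the first coordinates form bijective columns the remaining $G_LG_R$ cleans everything up exactly as in Lemma~\ref{lem:RLCharacterization}, whose proof does not use finiteness.

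With this engine the two three-alternation claims are short. If $B$ is finite, every symbol of $A$ occurs exactly $|B|$ times in $M$, so $N(M)$ has all row and column sums equal to the finite number $|B|$; Lemma~\ref{lem:MatrixHallInfinite} lets us peel off a permutation matrix and induct, giving $N(M) = P_1 + \cdots + P_{|B|}$, whence $\pi \in G_{RLR}$ and $\Sym(A\times B) = G_{RLR}$. For the failure of three alternations when $B$ is infinite and $|A|\ge 2$, I would build $\pi$ whose matrix $M$ has the symbol $0$ appearing only in row $0$ while row $0$ also contains a symbol $1$. Any bijective column must realize the symbol $0$, which only row $0$ can supply, forcing $\sigma_b(0)=0$ for all $b$; but then $N(M)_{0,1}>0$ cannot be realized, so $N(M)$ is not decomposable and $\pi \notin G_{RLR}$. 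Such an $M$ is realizable as a genuine permutation since every symbol still occurs $|B|$ times.

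The heart of the negative four-alternation claim is a permutation for which \emph{every} column shuffle leaves $N(M')$ non-decomposable. Take $A=B=\N$ and let $M$ have every column $b\ge 1$ monochromatic with symbol $0$, while column $0$ contains each symbol $c\ge 1$ exactly $\aleph_0$ times; every symbol then occurs $\aleph_0$ times, so this is a valid permutation. A column shuffle may only permute entries within columns, so after any shuffle each row $a$ still receives its single non-$0$ symbol $t_a$ from column $0$ and symbol $0$ from all other columns; hence row $a$ of $M'$ uses only the two symbols $\{0,t_a\}$, and since column $0$ holds $\aleph_0$ copies of each $c\ge1$, the fibre $R_c = \{a : t_a = c\}$ is infinite for every $c\ge 1$. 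A bijective column $\sigma$ would satisfy $\sigma(a)\in\{0,t_a\}$ and, being injective, could take the value $0$ at most once, so $\sigma(a)=t_a$ for all but one $a$; on the infinite set $R_1$ this forces $\sigma(a)=1$ for infinitely many $a$, contradicting injectivity. Thus no bijective column exists for any $M'$, so $\pi \notin G_{RLRL}$, and conjugating by the coordinate swap yields a witness for $\pi \notin G_{LRLR}$. The one subtlety to get right is exactly this robustness: the obstruction must survive all column shuffles, which it does precisely because infinitely many distinct symbols are trapped in a single column and a column can feed each row only one of them.

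Finally, the positive statement $\Sym(A\times B)=G_{LRLR}\cup G_{RLRL}$ for countable $A,B$. When either factor is finite this follows from the previous paragraph (or Theorem~\ref{thm:Two}) since $G_{RLR}\subseteq G_{RLRL}$, so assume $A=B=\N$. Here I would run the advertised Hilbert's Hotel / back-and-forth argument: enumerate $A\times B$ and build the four single-coordinate permutations stage by stage, committing finitely many values at a time so that the partial maps stay injective and compose to $\pi$ on the committed elements, using countability to eventually exhaust everything. Equivalently, via the engine above, one shows that for at least one of the two coordinate matrices the relevant count matrix can be routed (by a column shuffle, respectively a row shuffle) into a sum of $\aleph_0$ permutation matrices -- an infinite Birkhoff-type decomposition -- and that an obstruction of the kind above can block only one of the two orders. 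I expect this positive direction to be the main obstacle: unlike the finite-sum case, decomposing a doubly-$\aleph_0$-stochastic matrix into $\aleph_0$ permutation matrices is not automatic, and the work lies in showing that a suitable routing always exists for one of the two orders and in managing the back-and-forth bookkeeping so that all four maps come out as genuine permutations.
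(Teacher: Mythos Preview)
Your engine and the three ``easy'' parts are correct and essentially match the paper. The characterization of $G_{RLR}$ via decomposability of the count matrix $N(M)$ is exactly the mechanism behind the paper's proof (Lemma~\ref{lem:RLCharacterization} plus Lemma~\ref{lem:MatrixHallInfinite}), and your counterexamples for $G_{RLR}$ and $G_{RLRL}$ are, up to cosmetic differences, the same as the paper's (the paper's $G_{RLRL}$-obstruction is precisely your ``column $0$ holds all the nonzero symbols, all other columns are $0$'' matrix).

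The genuine gap is in the positive direction, and it is not only that the back-and-forth is not written out. What is missing is the structural dichotomy that tells you \emph{which} of the two orders succeeds for a given $\pi$. The paper isolates the condition ``every cofinite set of columns of $M$ still sees infinitely many distinct $A$-values'' and proves two things: (i) if it holds then $\pi\in G_{RLRL}$, and (ii) if it fails then the dual row condition must hold, by a short pigeonhole: a finite set of columns absorbing cofinitely many $A$-values forces those columns of $X$ to contain $(a,b)$ for all $b\in B$, which cannot happen within finitely many rows. Your own $G_{RLRL}$-counterexample is exactly a matrix violating this column condition, so you were one step from the right invariant, but your sketch (``an obstruction of the kind above can block only one of the two orders'') does not name it and does not prove the either/or.

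Even granting the dichotomy, the back-and-forth is not routine in the way your sketch suggests. The paper's first $G_L$ step does real work: it arranges not just that every row acquires infinitely many distinct values, but also that any $a\in A$ occurring in infinitely many columns is spread over infinitely many rows. Without this second clause the subsequent $G_R$ step can stall, because the ``move the next missing symbol into column $n$'' half of the back-and-forth needs an unfrozen copy of that symbol on a fresh row. Your framing via decomposing $N(M')$ into $\aleph_0$ permutation matrices is equivalent, but you would still need to specify which column shuffle $M\mapsto M'$ makes the decomposition possible, and that specification is exactly this two-part condition.
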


\begin{proof}
In the first claim, if $A$ and $B$ are finite, then this is Theorem~\ref{thm:Two}, and if only one of them is finite, this follows from the last claim (which we prove last). We thus consider the case that both are infinite.

As usual, we consider the $A$-by-$B$ matrix $X$ over $A \times B$ and the cellwise projection $M$ with values in~$A$, representing a permutation $\pi$ of $A \times B$. We will compose $\pi$ with elements of $G_L, G_R, G_L, G_R$ in that order. Again we use the standard left action, which formulaically is precomposition with inverse, and in terms of matrices corresponds to directly permuting the entries.

We prove the following:
\begin{itemize}
\item if any cofinite set of columns of $M$ contains infinitely many distinct values of $A$, i.e.
\[ \forall A' \Subset A, B' \Subset B: \exists a' \notin A', a \in A, b \notin B': M_{ab} = a', \]
then $\Sym(A \times B) = G_R G_L G_R G_L$,
\item if the dual claim holds, then $\Sym(A \times B) = G_L G_R G_L G_R$,
\item either the claim or its dual holds.
\end{itemize}

We first prove the third item. Suppose that $M$ does not satisfy the first item. Then a finite set of columns of $M$ contains all values from a cofinite subset $C$ of $A$. Then those columns of $X$ must contain all pairs $(a,b)$ with $a \in C$, so $b \in B$ takes every possible value in these finitely many columns. This clearly cannot happen in finitely many rows and finitely many columns. Thus the dual claim of the first item holds.

Thus we only need to prove the first item, as the second is symmetric, and by the third claim, these together give $\Sym(A \times B) = G_{LRLR} \cup G_{RLRL}$.

Assume then that any cofinite set of columns of $M$ contains infinitely many distinct values of $A$. We describe what happens to the matrices $M$ and $X$ after each step (we do not rename them after each step). Our plan is the following:
\begin{enumerate}
\item After applying $G_{L}$, every row of $M$ has infinitely many distinct values, and every $a \in A$ that appears on infinitely many columns also appears on infinitely many rows.
\item After an application of $G_R$, all columns of $M$ have exactly one copy of each $a \in A$.
\item By Lemma~\ref{lem:RLCharacterization}, another application of $G_{RL}$ finishes the proof.
\end{enumerate}

Assume $A = \N, B = \N$. In the first step, we modify each column at most once, and then freeze it and never modify it again. We go through $n \in \N$, and on the $n$th turn, we modify a finite set of columns and then freeze them. What we ensure on the $n$th turn is that the first $n$ rows of $M$ all contain at least $n$ distinct values that appear frozen columns, and that each $a \in A$ with $a < n$ which appears on infinitely many columns also appears on at least $n$ distinct rows on frozen columns.

No trick is needed, we just do it: After a finite number of steps, we have seen only frozen finitely many columns, and $M$ contains infinitely many values in any cofinite set of columns, so we never run out of fresh values $a \in A$ to move to rows needing them, thus we can indeed make sure each row contains more and more distinct values, and if $a \in A$ appears on infinitely many columns, then we can make this choice infinitely many times.

In the limit, in the compact topology of cellwise convergence of the matrix entries, clearly the resulting matrix still describes a permutation (we performed a permutation at most once on each column, so clearly the transformation is columnwise bijective, thus bijective). Every row contains infinitely many distinct values since for any row $a$ and any $n \geq a$, on the $n$th turn we made sure the $a$th row contains at least $n$ values.


In the second step, we again construct the permutation column by column, but now we are permuting rows instead of columns. We modify each row infinitely many times, but with smaller and smaller supports, and take the limit of the process. Note that the set of matrices representing injective maps $A \times B \to A \times B$ is closed with respect to this topology, so the limit is automatically injective (if well-defined). Matrices representing surjective maps are not closed, so our matrix may fail to be surjective in the limit if we are careless. To ensure surjectivity, we fix an enumeration $i_0, i_1, i_2, ...$ of $A \times B$, and say the \emph{index} of $(a,b)$ is the $n$ such that $i_n = (a,b)$. For surjectivity, it is enough that each $(a,b)$ appears in the limit, and for this it is enough that from some point on, we no longer move pairs $(a,b)$ with indices up to $n$.

To ensure that we get a limit in the second step, we modify each column at most once. We go through $n \in \N$, and on the $n$th turn, we permute the rows in such a way that the $j$th column is not modified for $j < n$, the $n$th column contains exactly one copy of each $a \in A$ after the permutations, and the $n$th pair is moved to column $n$ unless it is already in some column $j < n$.

Call/color an element of the $n$th column \emph{red} if the row containing it has not yet been permuted on the $n$th turn, and green otherwise. We begin the turn by moving the $n$th pair to column $n$ if it is among columns $j > n$ (and color it green, so that at this point we have at most one green entry). We now perform a back-and-forth argument where we modify each row at most once by alternating the following steps:
\begin{itemize}
\item Pick the next $a \in A$ that has not yet been moved into the column $n$ (i.e. all occurrences of $a$ in it are red), move that $a$ into the column (possibly it was already there), color it green and freeze the row.
\item Let $k \in A$ be the first row with a red symbol and move any fresh $a \in A$ (that does not yet appear as a green symbol in the $n$th column) into it.
\end{itemize}

The first type of move in the back-and-forth is always possible: If $a \in A$ appeared on only finitely many columns initially (thus also after the first step), in which case it appeared on infinitely many distinct rows, and since exactly $n$ copies of $a$ are on frozen columns $j < n$, there are still infinitely many unfrozen copies of $a$ on infinitely many distinct rows. If $a \in A$ appeared on infinitely many columns before the first step, then after the first step $a \in A$ appears on infinitely many rows, thus on the $n$th turn there are again still unfrozen copies of $a$ on infinitely many distinct rows.

The second type of move in the back-and-forth is always possible: All rows contain infinitely many distinct symbols $a \in A$, and at any point of the process we have only finitely many green symbols on the column.

This concludes the construction, as Lemma~\ref{lem:RLCharacterization} applies to the resulting matrix.

For the second claim, we observe that no $A \times B$-matrix with $A$-projection
\[ \left(\begin{matrix}
* & 0 & 0 & 0 & 0 & \cdots \\
* & 0 & 0 & 0 & 0 & \cdots \\
* & 0 & 0 & 0 & 0 & \cdots \\
* & 0 & 0 & 0 & 0 & \cdots \\
* & 0 & 0 & 0 & 0 & \cdots \\
\vdots & \vdots &\vdots &\vdots &\vdots & \ddots 
\end{matrix}\right) \]
where the $*$-symbols are elements of $A \setminus \{0\}$,
is in $G_R G_L G_R G_L$. Namely, the first application of $G_L$ is useless, as the set of matrices of this form is invariant under $G_L$. The following application of $G_R$ will move at most one nonzero $a \in A$ into each row. Then already the top left $3$-by-$3$ block necessarily contains at least two zeroes on some column, so Lemma~\ref{lem:RLCharacterization} does not apply. Note that matrices of this form do exist since $|B| = |A \times B|$.

For the third claim, let $\{0, 1\} \subset A$, $0 \in B$. Pick a bijection $\phi : B \mapsto B \setminus \{0\}$. Consider a permutation $\pi$ mapping $(0, 0) \mapsto (1, 0)$, $(0, \phi(b)) \mapsto (0, b)$, $(1, b) \mapsto (1, \phi(b))$. Writing $\pi$ again as an $A$-by-$B$ matrix $X$, the $0$-row contains exactly one element which should be in the $1$-row in the end, and the $1$-row contains only elements that belong to it. We have $\pi \notin G_R G_L G_R$: After the application of $G_R$ to $\pi$, we still have exactly one element of the form $(1,b)$ (namely $(1,0)$) in the $0$-row. If it is in the $b$-column, then the $b$-column contains two elements $(1,m), (1,n)$ of this form, and thus after the application of $G_L$, the $b$-column contains such an element in some row $a \neq 1$. Therefore after applying $G_R$ we still have at least one element of the form $(1,n)$ in the $a$-row, so we have not turned $\pi$ to the identity.

For the fourth claim, the proof is that of Theorem~\ref{thm:Two}, but using Lemma~\ref{lem:MatrixHallInfinite} in place of Lemma~\ref{lem:MatrixHall}.
\end{proof}

\begin{corollary}
The category $\CountSet$ has alternation diameter $4$.
\end{corollary}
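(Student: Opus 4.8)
The plan is to read off both bounds directly from Theorem~\ref{thm:CountableCase}. Recall that the alternation diameter of the category is the supremum, over all countable products $A\times B$, of the diameter of $\Sym(A\times B)$ with respect to the generating set $G_L\cup G_R$; since $G_L$ and $G_R$ are groups, an element lies at distance at most $k$ from the identity exactly when it lies in the union of the alternating products of length at most $k$, so distance $\le 3$ means membership in $G_{LRL}\cup G_{RLR}$ and distance $\le 4$ means membership in $G_{LRLR}\cup G_{RLRL}$. For the upper bound, the first claim of Theorem~\ref{thm:CountableCase} gives $\Sym(A\times B)=G_{LRLR}\cup G_{RLRL}$ whenever $|A|,|B|\le\aleph_0$, so every permutation of a countable product is reachable in four alternations and every countable product has diameter at most $4$. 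It therefore remains to exhibit a single product whose diameter is genuinely $4$, that is, one permutation lying in neither $G_{LRL}$ nor $G_{RLR}$; this is exactly what keeps the supremum from dropping to $3$.

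To produce such a permutation I would take $A=B=\N$ and superimpose two obstructions on orthogonal strips that fix the $2\times 2$ corner, so that their supports are disjoint. On the strip consisting of rows $\{0,1\}$ and columns $\{2,3,\dots\}$ I place a copy of the permutation $\pi_0$ built in the proof of the third claim of Theorem~\ref{thm:CountableCase} (relabelling the columns $\{2,3,\dots\}$ as $\N$): this makes $\proj_A$ equal to $1$ everywhere on row $1$ and gives row $0$ exactly one entry of $\proj_A$-value $1$ with all others $0$. On the strip consisting of rows $\{2,3,\dots\}$ and columns $\{0,1\}$ I place the transpose obstruction, i.e.\ the same permutation with the two coordinates exchanged, so that $\proj_B$ equals $1$ everywhere on column $1$ and column $0$ carries exactly one entry of $\proj_B$-value $1$. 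Everywhere else --- on the corner $\{0,1\}\times\{0,1\}$ and on the bulk $\{2,3,\dots\}\times\{2,3,\dots\}$ --- I let $\pi$ be the identity. The two strips are disjoint and each obstruction permutes its own strip internally, so $\pi$ is a well-defined permutation of $\N\times\N$.

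I would then verify the two non-membership statements separately, each simply reusing the counting argument already written out for the third claim of Theorem~\ref{thm:CountableCase}. For $\pi\notin G_{RLR}$ only the $\proj_A$-values of rows $0$ and $1$ are relevant, and these are untouched by the column strip (which lives in rows $\ge 2$): row $1$ is entirely $\proj_A$-value $1$ and row $0$ carries exactly one $\proj_A$-value $1$, which is precisely the configuration shown there to be unsortable to the identity by a row--column--row alternation (the stray value is forced into a wrong row after the middle $G_L$ step, and the final $G_R$ cannot repair the $\proj_A$-coordinate). By the symmetric argument, using that the $\proj_B$-values of columns $0$ and $1$ are untouched by the row strip, $\pi\notin G_{LRL}$. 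Hence $\pi\notin G_{LRL}\cup G_{RLR}$, the diameter of $\N\times\N$ is at least $4$, and with the upper bound it equals $4$; so the category $\CountSet$ has alternation diameter $4$.

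The only genuine obstacle is making the two obstructions coexist inside one permutation without colliding: a naive overlay forces a clash at the corner cell, where the demand that all of row $1$ have $\proj_A$-value $1$ and all of column $1$ have $\proj_B$-value $1$ sends two distinct source cells to the single target with both projections equal to $1$. Confining each obstruction to its own strip and fixing the corner removes this clash while leaving each obstruction's defining projection pattern intact, since the $G_{RLR}$-obstruction reads only rows $0,1$ and the $G_{LRL}$-obstruction reads only columns $0,1$; I expect this separation-of-coordinates observation to be the crux of the argument.
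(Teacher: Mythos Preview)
Your argument is correct, but you are working harder than necessary for the lower bound. The corollary follows from Theorem~\ref{thm:CountableCase} in one line: the second claim of that theorem already produces, for $A=B=\N$, a permutation $\pi\notin G_{LRLR}$, and since $G_{LRL}\subset G_{LRLR}$ (append $\id\in G_R$) and $G_{RLR}\subset G_{LRLR}$ (prepend $\id\in G_L$), any such $\pi$ automatically lies outside $G_{LRL}\cup G_{RLR}$. So the paper's route needs no new construction at all.

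Your route is genuinely different: you ignore the second claim and instead splice together two disjoint copies of the third-claim obstruction, one on a horizontal strip witnessing $\pi\notin G_{RLR}$ and the transposed one on a vertical strip witnessing $\pi\notin G_{LRL}$. The verification goes through exactly as you say --- the $G_{RLR}$ argument reads only the $\proj_A$-pattern on rows $0$ and $1$, which your column strip (living in rows $\ge 2$) does not touch, and symmetrically for $G_{LRL}$ --- and the four-region partition (corner, two strips, bulk) does make $\pi$ a bijection. So the construction is sound. What your approach buys is a self-contained witness to diameter exactly $4$ that avoids the harder ``$\neq G_{LRLR}$'' analysis; what it costs is the extra bookkeeping of making the two obstructions coexist, which the paper sidesteps entirely by quoting the stronger non-membership already proved.
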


Since in $\FinSet$ we had alternation diameter $3$ and $G = G_{LRL}, G = G_{RLR}$, while in $\CountSet$ we have alternation diameter $4$ and $G = G_{LRLR} \cup G_{RLRL}$ but \emph{not} $G = G_{LRLR}$ or $G = G_{RLRL}$, one notes that ``alternation diameter'' indeed loses some information. One could instead mimic quantifier hierarchies and define $\Sigma_1 = G_L$, $\Pi_1 = G_R$, and inductively $\Sigma_{i+1} = G_L \Pi_i$, $\Pi_{i+1} = G_L \Sigma_i$, $\Delta_i = \Sigma_i \cap \Pi_i$. Then in $\FinSet$, the alternation hierarchy collapses on the level $\Delta_3$, while the one for $\CountSet$ collapses at the join $\Sigma_4 \cup \Pi_4$.

\subsection{Finite-dimensional vector spaces}

Besides $\Set$, an obvious place to look for category-wide diameter bounds is the category of finite-dimensional vector spaces. The first reason is that it is a category where all objects and morphisms behave nicely. The second is the intuition that dimension can often replace cardinality. We find that alternating diameter is indeed $3$, as in $\FinSet$.

Fix a field $k$ and let $\FinVect_k$ be the category of finite-dimensional vector spaces over $k$.

\begin{theorem}
\label{thm:FinVect}
The category $\FinVect_k$ has alternation diameter $3$. More precisely, let $A$ and $B$ be in $\FinVect_k$. Then $G = \Aut(A \times B) = GL(A \times B)$ satisfies $G = G_L G_R G_L = G_R G_L G_R$.
\end{theorem}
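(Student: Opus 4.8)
The plan is to work with explicit block-matrix representations and reduce the general problem to the easy case where a certain corner block is invertible. Fixing bases (or working invariantly with respect to the splitting), write an element of $G = \GL(A \times B)$ as a block matrix
\[
M = \begin{pmatrix} P & Q \\ R & S \end{pmatrix}
\]
acting on column vectors $\left(\begin{smallmatrix} a \\ b \end{smallmatrix}\right)$, where $P \colon A \to A$, $Q \colon B \to A$, $R \colon A \to B$, $S \colon B \to B$. Unwinding the definitions, $\proj_B \circ g = \proj_B$ says exactly that the bottom block row of $g$ is $(0 \ I)$, so $G_L = \{ \left(\begin{smallmatrix} P & Q \\ 0 & I \end{smallmatrix}\right) : P \in \GL(A) \}$; dually $G_R = \{ \left(\begin{smallmatrix} I & 0 \\ R & S \end{smallmatrix}\right) : S \in \GL(B) \}$. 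Both are subgroups, so it suffices to prove $G = G_R G_L G_R$; the identity $G = G_L G_R G_L$ then follows by interchanging the roles of $A$ and $B$, which merely swaps the names of $G_L$ and $G_R$.

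First I would dispose of the case where the top-left block $P$ is invertible. Here a block $LU$ factorization does the job: writing $T = S - R P^{-1} Q$ for the Schur complement (which is invertible because $\det M = \pm \det P \cdot \det T$ and $M$ is invertible), one has
\[
M = \begin{pmatrix} I & 0 \\ R P^{-1} & I \end{pmatrix} \begin{pmatrix} P & Q T^{-1} \\ 0 & I \end{pmatrix} \begin{pmatrix} I & 0 \\ 0 & T \end{pmatrix},
\]
and the three factors lie in $G_R$, $G_L$, $G_R$ respectively. Thus $M \in G_R G_L G_R$ whenever $P$ is invertible.

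The crux is to reduce the general case to this one by right-multiplying $M$ by a suitable element of $G_R$ so as to make the top-left block invertible. For $R_0 \colon A \to B$ we have $M \left(\begin{smallmatrix} I & 0 \\ R_0 & I \end{smallmatrix}\right) = \left(\begin{smallmatrix} P + Q R_0 & Q \\ R + S R_0 & S \end{smallmatrix}\right)$, so I need an $R_0$ with $P + Q R_0 = (P \ Q)\left(\begin{smallmatrix} I \\ R_0 \end{smallmatrix}\right)$ invertible. Since $M$ is invertible, its top block row $(P \ Q) \colon A \times B \to A$ is surjective, so its kernel $K$ has dimension $\dim B$, the same as that of $W_0 := \{0\} \times B$. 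The key lemma is that any two subspaces of equal dimension of a finite-dimensional space admit a common complement; this holds over \emph{every} field, including finite ones, because a vector space is never the union of two proper subspaces, so a complement can be built up one vector at a time. Applying it to $K$ and $W_0$ yields a subspace $V$ of dimension $\dim A$ complementary to both: being complementary to $W_0$ forces $V$ to be the graph of some $R_0 \colon A \to B$, and being complementary to $K$ makes $(P \ Q)$ injective, hence bijective, on $V$, i.e. $P + Q R_0$ invertible. With this $R_0$, the matrix $M' = M \left(\begin{smallmatrix} I & 0 \\ R_0 & I \end{smallmatrix}\right)$ has invertible top-left block, so $M' \in G_R G_L G_R$ by the previous step, and therefore $M = M' \left(\begin{smallmatrix} I & 0 \\ R_0 & I \end{smallmatrix}\right)^{-1} \in G_R G_L G_R$, the two trailing $G_R$-factors collapsing since $G_R$ is a subgroup.

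I expect the main obstacle to be exactly this reduction step, and specifically guaranteeing that $P$ can be made invertible \emph{uniformly} over all fields. Over an infinite field one could instead argue by Zariski genericity, but the clean route that also covers finite fields is the common-complement lemma together with the remark that a vector space is not a union of two proper subspaces. The remaining computations — the block $LU$ identity and the bookkeeping of which factor lands in $G_L$ versus $G_R$ — are routine.
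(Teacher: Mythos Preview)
Your argument for $G = G_R G_L G_R$ is correct and follows essentially the same block-matrix reduction as the paper: make one corner block invertible by a one-sided move, then finish by elementary block elimination. The paper does this from the left (an element of $G_L$ turns the top-left block into $I$, then $G_R$ clears the bottom-left and normalises the bottom-right, then $G_L$ clears the top-right), and simply asserts that the first step is ``easy to see'' because the first $m$ columns of $M$ have full rank. You do the symmetric thing from the right, and you actually \emph{prove} the key step---that one can choose $R_0$ with $P + QR_0$ invertible---via the common-complement lemma. That is a genuine improvement in rigour: it makes transparent why the step succeeds over finite fields as well, whereas the paper's ``row operations'' remark leaves this implicit. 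Your block-$LU$ formula is fine (minor quibble: $\det M = \det P \cdot \det T$ exactly, no sign).

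One gap: the theorem asserts that the alternation diameter is \emph{exactly} $3$, not just at most $3$, and you have not addressed optimality. The paper handles this by taking $A = B = k$ and checking directly that the swap $\left(\begin{smallmatrix} 0 & 1 \\ 1 & 0 \end{smallmatrix}\right)$ is not in $G_L G_R$: after applying an element of $G_R$ to the identity, the first row is still $(1\ 0)$, so no subsequent $G_L$-move can produce first row $(0\ 1)$. You should include this (or an equivalent) one-line lower-bound argument to complete the proof.
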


\begin{proof}
Let $m = \dim(A), n = \dim(B)$. The claims $G = G_L G_R G_L$ and $G = G_R G_L G_R$, are symmetric, so we only prove the first.

Consider an arbitrary matrix $M \in \GL(m + n, k)$ in block representation with four blocks, of widths and heights $m$ and $n$, respectively (the ``$A$-by-$A$ block'' of size $m$-by-$m$ on the top left). Now applying automorphisms in $G_L$ (from the left) amounts to row operations that do not modify the bottom blocks, and $G_R$ modifies the bottom blocks only. We need to turn $M$ into the identity matrix with an element of $G_L G_R G_L$.

If $v_1, \ldots, v_{m+n}$ are the rows of $M$, then their restriction to their first $m$ coordinates has full rank. It is easy to see that then there is a finite sequence of row operations that do not affect the bottom rows -- that is, an element of $G_L$ -- left multiplication by which turns the top left block into the identity matrix.

Next, apply an element of $G_R$ to turn the bottom left block into zeroes, and then the bottom right block into the identity matrix. Finally, apply an element of $G_L$ to turn the top left block into the all-zero matrix. We have shown that $M$ can be turned into the identity matrix by multiplying it by an element of $G_{LRL}$, thus $M \in G_{LRL}$.

To see that this is optimal, it is enough to consider $k = A = B$ and show that $G = GL(A \times B)$ does not satisfy $G = G_L G_R$.

Algebraically, it is easy to see that $\left( \begin{smallmatrix} 0 & 1 \\ 1 & 0 \end{smallmatrix} \right) \notin G_L G_R$. Namely, after applying a row operation that modifies only the second row (element of $G_R$) to the identity matrix, the first row is still $(1 \; 0)$ so the second row cannot be $(1 \; 0)$, and thus an application of $G_L$ cannot turn the resulting matrix into $\left( \begin{smallmatrix} 0 & 1 \\ 1 & 0 \end{smallmatrix} \right)$.
\end{proof}

In the case $k = A = B = \R$, one can also verify $\left( \begin{smallmatrix} 0 & 1 \\ 1 & 0 \end{smallmatrix} \right) \notin G_L G_R$ geometrically by staring intently at a square.

Special linear groups $SL(A \times B)$ also have alternation diameter $3$ in an obvious sense, by the same proof as for $GL(A \times B)$. (This fact does not directly fit the framework of this paper, in that the author does not know whether $SL(A \times B)$ can be seen (in a natural way) as the automorphism group of a product object in a category.)

\subsection{Graphs}
\label{sec:Graphs}

We mention a related result from graph theory. The box product $G \square H$, sometimes called the Cartesian product of $G$ and $H$ defined by
\begin{align*}
((g,h), (g',h')) \in E(G \square H) \iff &((g,g') \in E(G) \wedge h = h') \vee \\
& (g = g' \wedge (h,h') \in E(H)),
\end{align*}
(though it is not the category-theoretic product in the usual category of simple graphs) admits unique prime decompositions for finite connected graphs, and automorphisms of $G \square H$ are essentially entirely determined by $G$ and $H$ (and a bit of counting) in the sense that if we decompose $G$ and $H$ into their prime factors, every automorphism consists of a permutation of the factors (with respect to a fixed identification of isomorphic factors), followed by separate permutations of the factors. In this sense, connected graphs with respect to box product have ``bounded alternation diameter up to reordering of prime factors''. See \cite{ImKlRa08} for details. For some related observations see Remark~\ref{rem:CPP} and Section~\ref{sec:Q}.

\section{Left and right groups}
\label{sec:GeneralDef}

In this section we perform the (rather trivial) diagram chasing and algebra required to show that $G_L$ and $G_R$ ``make sense'', i.e.\ are actually subgroups, and are independent of the choice of the product object $A \times B$.

We give the diagrammatic definition of these subgroups. For a product object $A \times B$ with defining projections $\pi_A : A \times B \to A$ and $\pi_B : A \times B \to B$, write $G = \Aut(A \times B)$ and write $G_L$ for the set of elements $f \in \Aut(A \times B)$ such that the leftmost diagram below commutes in Figure~\ref{fig:DiagDef} (resp. $G_R$ for the set of elements $g$ such that the rightmost diagram commutes).
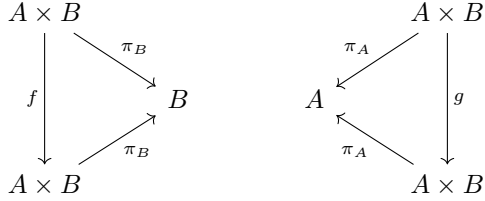
\begin{figure}[h!]
\begin{center}
\begin{tikzcd}
A \times B \arrow{rd}{\pi_B} \arrow[swap]{dd}{f} &  \\
& B \\
A \times B \arrow[swap]{ur}{\pi_B} &
\end{tikzcd} \hspace{1cm}
\begin{tikzcd}
& A \times B \arrow[swap]{ld}{\pi_A} \arrow{dd}{g} \\
A & \\
& A \times B \arrow{lu}{\pi_A}  
\end{tikzcd}
\end{center}
\caption{Definition of $G_L$ (on the left) and $G_R$ (on the right).}
\label{fig:DiagDef}
\end{figure}

It is easy to see (by gluing diagrams, or by algebra) that $G_L$ and $G_R$ are submonoids of $\Aut(A \times B)$ under composition. To see that they are groups, note that if $f \in G_L$ and $fg = gf = \id_{A \times B}$ then $\pi_B f = \pi_B \implies \pi_B f g = \pi_B g \implies \pi_B = \pi_B g \implies g \in G_L$. From this, it follows $G_L$ (symmetrically $G_R$) is indeed a subgroup of $G$.

For an object $B$ in a category $\mathcal{C}$, the \emph{over category above $B$}, denoted $\mathcal{C}/B$, is the category whose objects are morphisms $f : C \to B$ in $\mathcal{C}$ (or simply the morphisms themselves), and morphisms from $f : C \to B$ to $g : D \to B$ are morphisms $h : C \to D$ in $\mathcal{C}$ such that the following diagram commutes:
\begin{center}
\begin{tikzcd}
C \arrow{rr}{h} \arrow{dr}{f} & & D \arrow{dl}{g} \\
& B & 
\end{tikzcd} 
\end{center}

Applying some geometric transformations to this diagram reveals a similarity with $G_L$ and $G_R$, and we can make the observation that the above proof that $G_L$ is a group actually shows that $G_L$ is the automorphism group of the morphism $\pi_B : A \times B \to B$ as an object of the over category above $B$. Similarly $G_R$ is the automorphism group of $\pi_A$ in the over category above $A$.

To see that the choice of the product object does not matter, suppose $C$ is another product of $A$ and $B$, and $\pi_A' : C \to A, \pi_B' : C \to B$ the defining projections. By the universal property, there is a unique isomorphism $\phi : C \to A \times B$ such that the following diagram commutes:

\begin{center}
\begin{tikzcd}
& C \arrow[swap]{ld}{\pi_A'} \arrow{rd}{\pi_B'} \arrow{dd}{\phi} &  \\
A & & B \\
& A \times B \arrow{lu}{\pi_A} \arrow[swap]{ru}{\pi_B} & 
\end{tikzcd} 
\end{center}

Then $\phi$ is an isomorphism between $\pi_B$ and $\pi_B'$ and thus gives an isomorphism of their automorphism groups in the over category $\mathcal{C}/B$, which as discussed are the groups $G_L$ (corresponding to the two different choices of the product object). The same applies to $G_R$. 
 
We summarize the discussion into a theorem. Define a \emph{group triple} to be a triple of groups $(G_1, G_2, G_3)$ such that $G_2, G_3 \leq G_1$. We say two group triples $(G_1, G_2, G_3)$ and $(H_1, H_2, H_3)$ are isomorphic is an isomorphism $\phi : G_1 \to H_1$ such that $\phi(G_2) = H_2, \phi(G_3) = H_3$.

\begin{theorem}
Let $\mathcal{C}$ be a category and let $A \times B$ be a product of objects $A$ and $B$ with defining projections $\pi_A : A \times B \to A$ and $\pi_B : A \times B \to B$. Then the sets $G_L$ and $G_R$ defined by the diagrams in Figure~\ref{fig:DiagDef} are subgroups of $G = \Aut(A \times B)$ under composition. The resulting triple $(G, G_L, G_R)$ does not depend on the choice of the product $A \times B$, up to isomorphism of group triples.
\end{theorem}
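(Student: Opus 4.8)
The plan is to verify the two assertions in turn, since both are essentially contained in the preceding discussion; the work is to organize that discussion into a clean argument and to supply the one composition identity that is genuinely needed.

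For the subgroup claim, I would first observe that $G_L$ and $G_R$ are submonoids by diagram gluing. If $\pi_B \circ f = \pi_B$ and $\pi_B \circ g = \pi_B$, then $\pi_B \circ (f \circ g) = (\pi_B \circ f) \circ g = \pi_B \circ g = \pi_B$, and $\pi_B \circ \id_{A \times B} = \pi_B$ trivially, so $G_L$ contains the identity and is closed under composition, and symmetrically for $G_R$. The only real content is closure under inverses, and here I would invoke the over-category reformulation already noted in the text: $f \in \Aut(A \times B)$ lies in $G_L$ exactly when $\pi_B \circ f = \pi_B$, i.e.\ exactly when $f$ is an endomorphism of the object $\pi_B : A \times B \to B$ in $\mathcal{C}/B$. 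If $f \in G_L$, then its inverse $f^{-1}$ in $G$ satisfies $\pi_B = \pi_B \circ f \circ f^{-1} = \pi_B \circ f^{-1}$, so $f^{-1}$ is also a morphism over $B$; hence $f$ is an \emph{automorphism} in $\mathcal{C}/B$. This identifies $G_L$ with $\Aut_{\mathcal{C}/B}(\pi_B)$, which is automatically a group, and symmetrically $G_R = \Aut_{\mathcal{C}/A}(\pi_A)$.

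For independence of the chosen product, let $C$ be a second product of $A$ and $B$ with projections $\pi_A', \pi_B'$, and let $\phi : C \to A \times B$ be the canonical isomorphism from the universal property, characterized by $\pi_A \circ \phi = \pi_A'$ and $\pi_B \circ \phi = \pi_B'$. I would define $\Phi : \Aut(C) \to \Aut(A \times B)$ by conjugation, $\Phi(g) = \phi \circ g \circ \phi^{-1}$, which is a group isomorphism. The key check is that $\Phi$ carries the left group of $C$ into the left group of $A \times B$: if $\pi_B' \circ g = \pi_B'$, then using $\pi_B \circ \phi = \pi_B'$ and its consequence $\pi_B' \circ \phi^{-1} = \pi_B$ one computes
\[ \pi_B \circ \Phi(g) = \pi_B \circ \phi \circ g \circ \phi^{-1} = \pi_B' \circ g \circ \phi^{-1} = \pi_B' \circ \phi^{-1} = \pi_B, \]
so $\Phi(g)$ lies in $G_L$ for $A \times B$. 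The same computation with $\pi_A$ shows $\Phi$ sends right group to right group, and applying the argument to $\phi^{-1}$ yields the reverse inclusions, upgrading them to equalities, so $\Phi$ is an isomorphism of group triples. Conceptually, $\phi$ is precisely an isomorphism $\pi_B' \cong \pi_B$ in $\mathcal{C}/B$, and isomorphic objects have conjugate automorphism groups, which is exactly the statement for $G_L$.

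I do not expect a genuine obstacle: every step is a short diagram chase or a one-line composition identity. The only points requiring care are the bookkeeping of composition order, so that conjugation lands on the correct side, and remembering to transport the structure back along $\phi^{-1}$, which is what promotes the inclusions $\Phi(G_L^C) \subseteq G_L$ and $\Phi(G_R^C) \subseteq G_R$ to equalities and makes $\Phi$ an isomorphism of triples rather than merely a structure-preserving homomorphism.
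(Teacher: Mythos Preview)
Your proposal is correct and follows essentially the same approach as the paper: you verify the submonoid and inverse closure properties by the same one-line composition identities, identify $G_L$ with $\Aut_{\mathcal{C}/B}(\pi_B)$, and transport the triple along the canonical isomorphism $\phi$ furnished by the universal property. The only difference is cosmetic: you spell out the conjugation $\Phi(g) = \phi g \phi^{-1}$ and check $\pi_B \circ \Phi(g) = \pi_B$ explicitly, whereas the paper compresses this into the remark that $\phi$ is an isomorphism $\pi_B' \cong \pi_B$ in $\mathcal{C}/B$ and hence induces an isomorphism of automorphism groups.
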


\section{Undefined alternation diameter}
\label{sec:Undefined}

\subsection{Topological spaces}

It seems that, not surprisingly, in typical categories with a lot of structure, the alternation diameter is undefined for the whole category, that is, left and right automorphisms do not generate all others. We give in $\Top$ a non-trivial example (the plane) of undefined alternation diameter, and also a trivial example (the square $[0,1]^2$), and in $\Pos$ we exhibit an object with has trivial left and right groups, but non-trivial (though not far from trivial) automorphism group.

In the category $\Top$ of topological spaces and continuous functions, the automorphism group $\Homeo([0,1] \times [0,1])$ has undefined alternation diameter, since the left border and the top border can be exchanged by an automorphism, but this obviously cannot be done by $G_L$ or $G_R$. We state this as a metalemma.\footnote{We add ``meta'' to distinguish this from the lemma which would be obtained by replacing ``nice'' by the best possible list of necessary properties, which can be deduced from the proof.}

\begin{metalemma}
\label{mlem:Flip}
Let $\mathcal{C}$ be a nice enough concrete category, and $X \times X$ a product object. If $X \times X$ has a definable subset of the form $Y \times X \cup X \times Y \neq X \times X$, then $G = \Aut(X \times X)$ is not equal to $G_w$ for any $w \in \{L, R\}^*$.
\end{metalemma}

\begin{proof}
We have $G_R(Y \times X) = Y \times X$ by the definition of $G_R$, and $G_R(X \times Y) \subset Y \times X \cup X \times Y$ since $Y \times X \cup X \times Y$ is definable. Since $G_R$ is a group action and $G_R(Y \times X) = Y \times X$, we must have $G_R(X \times Y) = X \times Y$. Similarly, $G_L(X \times Y) = X \times Y$ and $G_L(Y \times X) = Y \times X$. The flip $f(x, y) = (y, x)$ is in $\Aut(X \times X)$ by the universal property of $X \times X$ (see Lemma~\ref{lem:Flip} for a diagrammatic deduction). Since it does not (setwise) stabilize $Y \times X$, the subgroups $G_L, G_R$ do not generate it, thus they do not generate $\Aut(X \times X)$.
\end{proof}

\begin{corollary}
The category $\Top$ has undefined alternation diameter.
\end{corollary}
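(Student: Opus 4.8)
The plan is to exhibit a single product object in $\Top$ to which Metalemma~\ref{mlem:Flip} applies, and let the metalemma supply the conclusion. The natural candidate is the square: take $X = [0,1]$ with its usual topology, so that the categorical product $X \times X$ is the unit square $[0,1] \times [0,1]$ with the product (Euclidean) topology, and $G = \Aut(X \times X) = \Homeo([0,1] \times [0,1])$.

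To invoke the metalemma I would supply the subset $Y \times X \cup X \times Y$ with $Y = \{0,1\}$, the boundary of the interval. Concretely $Y \times X = \{0,1\} \times [0,1]$ is the pair of vertical edges and $X \times Y = [0,1] \times \{0,1\}$ the pair of horizontal edges, so their union is exactly the topological boundary $\partial([0,1] \times [0,1])$ of the square. This is a proper subset of $X \times X$ since the open interior is nonempty, so the hypothesis $Y \times X \cup X \times Y \neq X \times X$ holds, and moreover $Y \times X \neq X \times Y$ (vertical edges are not the horizontal edges).

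The one genuine point to verify is the definability condition used in the metalemma's proof, namely that $Y \times X \cup X \times Y$ is preserved by every element of $G$. For the square this says the boundary $\partial([0,1] \times [0,1])$ is invariant under all self-homeomorphisms, which is the standard fact that the manifold boundary of a compact $2$-manifold with boundary is a topological invariant. Concretely, a point of $[0,1] \times [0,1]$ lies in the interior exactly when it has a neighborhood homeomorphic to $\R^2$, a property preserved by homeomorphisms, so the complementary boundary is preserved as well. I would also note that $\Top$ is a concrete category in which categorical products are Cartesian products of underlying sets carrying the product topology, hence it is ``nice enough'' for the metalemma.

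With these hypotheses checked, the metalemma immediately yields that $G = \Homeo([0,1] \times [0,1])$ is not equal to $G_w$ for any $w \in \{L,R\}^*$; in fact its proof shows that the flip $f(x,y) = (y,x)$, a self-homeomorphism carrying the vertical edges $Y \times X$ onto the horizontal edges $X \times Y$, is not generated by $G_L \cup G_R$ at all, since every element of $\langle G_L, G_R \rangle$ setwise fixes $Y \times X$. Hence the alternation diameter of $X \times X$ is undefined, and therefore so is that of the whole category $\Top$. I do not expect any real obstacle here: the only step requiring care is correctly matching the metalemma's ``definable'' subset with the topological boundary and citing its invariance under homeomorphisms; everything else is a direct application.
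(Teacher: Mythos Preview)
Your proposal is correct and matches the paper's intended argument: the corollary is stated immediately after Metalemma~\ref{mlem:Flip}, and the surrounding discussion makes clear that the example is $[0,1]\times[0,1]$ with the (homeomorphism-invariant) boundary playing the role of $Y\times X\cup X\times Y$, exactly as you set it up with $Y=\{0,1\}$. The paper leaves the corollary unproved, so you have simply filled in the details the author deemed obvious.
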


For homogenous spaces the question is more interesting. Let us show that also $\Homeo((0,1) \times (0,1)) = \Homeo(\R \times \R)$ has undefined alternation diameter, by showing that rowwise and columnwise homeomorphisms cannot untangle sufficiently wild homeomorphisms in finite time.

\begin{theorem}
\label{thm:Homeo}
The automorphism group $G = \Homeo(\R \times \R)$ has an element that is not in $G_w$ for any $w \in \{L, R\}^*$.
\end{theorem}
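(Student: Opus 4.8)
The plan is to exhibit an explicit ``wild'' homeomorphism $F$ and then show, by a direct argument on words, that $F \ne g_k \circ \cdots \circ g_1$ for any finite sequence $g_i \in G_L \cup G_R$. For the construction I would take $F$ to have infinitely many ``tangling features'' that accumulate (either escaping to infinity, e.g.\ an infinite twist whose image of the positive $x$-axis spirals out crossing every line infinitely often, or accumulating onto a fixed line, e.g.\ a sequence of nested swaps piling up on the $x$-axis). The guiding intuition, to be made precise, is that each feature requires a genuinely two-dimensional move to untangle, and that a single row- or column-homeomorphism can only untangle boundedly many features, so that no fixed finite word $w$ can deal with all of them at once; letting the features accumulate then defeats every $w$ simultaneously by a diagonal/pigeonhole argument over $|w|$.

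The structural facts I would rely on are the monotonicity and coordinate-preservation built into $G_L$ and $G_R$. Writing elements of $G_L$ as $(x,y)\mapsto(\phi_y(x),y)$ and of $G_R$ as $(x,y)\mapsto(x,\psi_x(y))$, an element of $G_L$ preserves the $y$-coordinate of every point and restricts to a \emph{monotone} homeomorphism of each horizontal line, while an element of $G_R$ preserves every $x$-coordinate and is monotone on each vertical line (the increasing/decreasing alternative is a single global choice by connectedness, so up to finitely many flips we may assume monotone-increasing). Consequently $G_L$ exactly preserves, for each $c$, the intersection pattern $\proj_A\big(C \cap \{y=c\}\big)$ of a curve $C$ with the horizontal line $\{y=c\}$ together with its left-to-right order, and dually for $G_R$ on vertical lines. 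The plan is to package these order-preservation properties into a \emph{potential} $\Phi$ measuring how ``untangled'' the image $F(R^+)$ (or the configuration of the accumulating marked points) currently is, designed so that applying one generator changes $\Phi$ by at most a bounded amount while the passage from the wild configuration to the trivial one (a straight ray / the identically-ordered points) forces $\Phi$ to drop by an unbounded amount.

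The main obstacle — and the technical heart of the theorem — is establishing precisely this \emph{per-step bounded-progress} lemma. I expect this to be delicate, because the obvious candidate potentials fail: a single element of $G_L$, through a shear $\phi_y$ oscillating wildly in $y$, can create or destroy arbitrarily many intersections of a curve with a given vertical line, so naive crossing-counts against a fixed finite family of lines are not boundedly controlled (indeed a short word can already kill all crossings with any two fixed axes). The resolution must therefore use the monotonicity and global bijectivity together, rather than raw counts: one should track the \emph{order/nesting} structure of the accumulating features relative to the foliation that the generator \emph{does} preserve, and argue that a monotone map of a line cannot reshuffle infinitely many nested, accumulating intervals and that each generator can ``resolve'' the nesting at only boundedly many scales before the preserved-coordinate rigidity blocks further progress. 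Making ``resolve boundedly many scales per generator'' into a clean monotone quantity, and verifying that the chosen $F$ genuinely realizes an unbounded amount of such nesting (while remaining a bona fide self-homeomorphism of the plane with continuous inverse), is where the real work lies; the final contradiction is then a routine comparison of the total achievable progress $O(|w|)$ against the unbounded progress demanded by $F$.
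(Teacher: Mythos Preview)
Your overall plan matches the paper's: exhibit a spiralling homeomorphism, track a combinatorial invariant of how the image of a reference path winds around a point, and argue that each generator can make only bounded progress. You have also correctly isolated the relevant rigidity (each $g\in G_L$ preserves all $y$-coordinates and is monotone on each horizontal line, and dually for $G_R$), and you are right that naive crossing counts against a \emph{fixed} finite family of lines are not controlled.

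The gap is that you have not found the invariant that actually works, and your suggested replacement (``order/nesting structure of the accumulating features'') is too vague to carry the argument. The paper's trick resolves exactly the obstacle you flagged, and it is simpler than what you are reaching for. Instead of fixed lines, one tracks crossings of the image path $g\circ\alpha\circ p$ (where $p$ is a fixed short segment ending at a point $q$) against the four cardinal rays $E,N,W,S$ emanating from the \emph{moving} image $g(\alpha(q))$ of the endpoint, recorded as a word in $\{E,N,W,S\}^*$ up to first entry into a small neighbourhood of $g(\alpha(q))$. Because $g\in G_R$ preserves the first coordinate, it sends the horizontal line through $\alpha(q)$ to itself by a monotone map taking $\alpha(q)$ to $g(\alpha(q))$; hence the $E,W$ rays from $\alpha(q)$ go to the $E,W$ rays from $g(\alpha(q))$ (possibly swapped), and the subword on $\{E,W\}$ is preserved verbatim up to a global swap. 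Symmetrically, $G_L$ preserves the $\{N,S\}$-subword. Together with the intermediate value observation that between any $N$ and $S$ there must occur an $E$ or $W$, a word containing $(ENWS)^{k+1}$ as a subsequence cannot be reduced to the trivial word by any composition in $(G_L G_R)^k$. Taking $\alpha(re^{2\pi i t})=re^{2\pi i(t+1/r)}$ (or placing spirals of all depths at different points) then gives arbitrarily long such words for a single $\alpha$, finishing the proof.

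So the missing idea is precisely: let the reference lines move with the endpoint. Once you do that, the ``bounded progress per step'' lemma becomes an exact preservation statement for one pair of letters at a time, and no delicate nesting analysis is needed.
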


\begin{proof}
To agree with our matrix convention ($G_R$ permutes the Rows), draw the axes of the plane so that the second is the horizontal axis (left-to-right) and the first axis is vertical (top-down).

Consider a homeomorphism $\alpha : \R \times \R \to \R \times \R$. Let $p$ be the unit speed path $p(x) = (x, 0)$ from $(0,-1)$ to $(0,0)$, and consider the $\alpha$-image $\alpha \circ p$ of this path, which is a path from $\alpha(0,-1)$ to $\alpha(0,0)$. Then $g \circ \alpha \circ p$ is a path from $g(\alpha(0,-1))$ to $g(\alpha(0,0))$ for any homeomorphism $g$.

Now, cut out a small compact neighborhood $U$ of $\alpha(0, 0)$ and consider the sequence in which $\alpha \circ p(x)$ traverses the rays $E, N, W, S$ in cardinal directions eminating from $\alpha(0,0)$, before it first enters $U$ (ignoring repeated crossings of the same ray). Let $u \in \{E, N, W, S\}^*$, be the (finite) word thus obtained. By the intermediate value theorem, between occurrences of $N$ and $S$ there is an occurrence of $E$ or $W$.\footnote{The word $u$ may depend on the choice of $U$, and there need not be a best possible choice for which this word is the longest. What is important is that some choice gives a long word. Formally, one can consider the set of all words that correspond to some choice of $U$ to obtain a more canonical invariant.}

Now consider $g \circ \alpha$ for some $g \in G_R$, and consider the corresponding word $w'$, computed up to the neighborhood $g(U)$. Observe that $g$ changes either the orientation of all rows, or none of the rows. Then in $w'$, we have at least as many alternations between $E$ and $W$ as in $w$. Similarly, an application of $G_L$ cannot decrease the number of alternations between $S$ and $N$.

It follows that if the path $p$ from $(0,-1)$ to $(0,0)$ is mapped by $\alpha$ to a path having $(ENWS)^{k+1}$ as a subsequence of the word $u$ corresponding to some choice of a small neighborhood of $\alpha(0,0)$, the corresponding spiral in each homeomorphism in $(G_R \circ G_L)^k \alpha$ has a corresponding word with subsequence $ENWS$, thus is not the identity map, as the identity map preserves $p$, and $p$ does not spiral with respect to any choice of $U$.

Of course, the points $(0,-1), (0,0)$ are not in any way special. By including such spirals of all finite diameters in our homeomorphism $\alpha$ by twisting horizontal paths from $(a, b)$ to $(a, b')$, we obtain a homeomorphism with undefined alternation diameter. One can also have infinitely many twists around the same point: through the usual identification $\R^2 \cong \C$ the homeomorphism $\alpha$ defined by
\[ \alpha(r e^{2 \pi i t}) = r e^{2 \pi i (t + \frac{1}{r})} \]
is not in $G_w$ for any $w \in \{L, R\}^*$, as any $(ENWS)^{k+1}$ appears as a subword of $u$ corresponding to a small enough choice of $U$ around the origin. See Figure~\ref{fig:Spiral} for a visualization of this homeomorphism.
\end{proof}

\begin{figure}
\begin{center}
\raisebox{-0.45\height}{\includegraphics[scale=0.4]{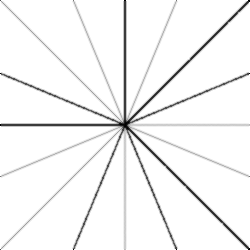}}
$\;\;\;\;\;\; \mapsto \;\;\;\;\;\;$
\raisebox{-0.45\height}{\includegraphics[scale=0.4]{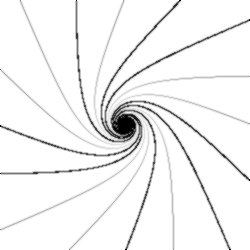}}
\end{center}
\caption{A twisted self-homeomorphism of the plane that cannot be untangled with finitely many alternations of $G_L$ and $G_R$, shown here twisting finitely many lines (of pseudorandom shades) eminating from the origin.}
\label{fig:Spiral}
\end{figure}

By a more careful analysis, one can construct homeomorphisms with word norm $n \in \N$ from the identity map in the homeomorphism group of $\R^2$ w.r.t. the generators $G_L \cup G_R$. It follows that for $G = \Aut(\R \times \R)$, $\langle G_L \cup G_R \rangle$ is not equal to $G_w$ for any $w \in \{L, R\}^*$. We do not have a global understanding of this group $\langle G_L \cup G_R \rangle$.

\subsection{Posets}
\label{sec:Posets}

In many categories, there are even easier ways to find undefined alternation diameter than Metalemma~\ref{mlem:Flip}. We now prove that under rather general assumptions, the flip automorphism used from Metalemma~\ref{mlem:Flip} is not generated by $G_L$ and $G_R$. We show how to apply them to some (finite) examples in \Pos.

\begin{lemma}
\label{lem:Flip}
Let $\mathcal{C}$ be a category and $G = \Aut(X \times X)$. If $G_L$ is trivial and the first and second canonical projections $\pi_i : X \times X \to X$ are distinct, then $G$ has undefined alternation diameter.
\end{lemma}

\begin{proof}
Let $\pi_1 : X \times X \to X$ be the defining left projection and $\pi_2 : X \times X \to X$ be the defining right projection. Define the flip automorphism $f : X \times X \to X \times X$ as follows: let $Y = X \times X$ and define a left and right projection, respectively $\pi_1', \pi_2'$, by $\pi_1' = \pi_2, \pi_2' = \pi_1$. The universal property yields a morphism $f : Y \to X \times X$ satisfying $\pi_1 \circ f = \pi_2$, $\pi_1 \circ f = \pi_2$. By the assumption, $\pi_1 \neq \pi_2$, so $f \neq \id_{X \times X}$.

From the existence of $f$ we see that if $G_L$ is trivial, also $G_R$ is trivial, as $f$ conjugates $G_R$ onto $G_L$. Thus, $\langle G_L \cup G_R \rangle$ is trivial. But $G$ is non-trivial, as $f \neq \id$.
\end{proof}

If $(A, \leq_A)$ and $(B, \leq_B)$ are partially ordered sets, a \emph{weakly increasing function} is $f : A \to B$ such that $a \leq_A b \implies f(a) \leq_B f(b)$. Let $C_n$ denote the \emph{$n$-chain} $\{0,1,...,n-1\}$ under the usual ordering.

\begin{proposition}
In the category \Pos\ of posets and weakly increasing functions, $C_n \times C_n$ has undefined alternation diameter.
\end{proposition}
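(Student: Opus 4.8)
The plan is to invoke Lemma~\ref{lem:Flip} with $X = C_n$, so I only need to check its two hypotheses (taking $n \geq 2$; for $n = 1$ the object is a single point and the notion is vacuous). Recall that the categorical product in $\Pos$ is the poset on $C_n \times C_n$ with the componentwise order, $(a,b) \leq (a',b')$ iff $a \leq a'$ and $b \leq b'$, and that its two defining maps $\pi_1, \pi_2$ are the coordinate projections. These are distinct for $n \geq 2$, since for instance $\pi_1(0,1) = 0 \neq 1 = \pi_2(0,1)$. Thus the only part of the hypothesis of Lemma~\ref{lem:Flip} that requires genuine work is the triviality of the left group $G_L$.

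To show $G_L = \{\id\}$, I would take an arbitrary $g \in G_L$. By definition $\pi_2 \circ g = \pi_2$, so $g$ preserves the second coordinate, and I may write $g(a,b) = (\sigma_b(a), b)$ for functions $\sigma_b : C_n \to C_n$; since $g$ is a bijection fixing every second coordinate, each $\sigma_b$ is a bijection. For a fixed $b$, the ``column'' $C_n \times \{b\}$ is, as a sub-poset with the induced order, isomorphic to the chain $C_n$ (one has $(a,b) \leq (a',b)$ iff $a \leq a'$), and $g$ maps this column onto itself. Because both $g$ and $g^{-1}$ are weakly increasing, $g$ restricts to an order-automorphism of this finite chain. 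The key point is then the rigidity of finite chains: an order-preserving bijection of $C_n$ must fix $0$ (its unique minimum), then $1$, and so on, hence is the identity. Therefore each $\sigma_b = \id$, so $g = \id$ and $G_L$ is trivial. Lemma~\ref{lem:Flip} now applies and yields that $G = \Aut(C_n \times C_n)$ has undefined alternation diameter.

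There is no real obstacle here beyond the bookkeeping observation that an element of $G_L$, which a priori is only monotone on the whole product, does restrict to a \emph{bona fide} order-automorphism of each column; once that is recorded, the rigidity of finite chains does all the work. It is worth noting that the automorphism witnessing undefinedness is concretely the flip $f(a,b) = (b,a)$, which is manifestly weakly increasing and an involution, and which is exactly the element produced abstractly by the universal property in Lemma~\ref{lem:Flip}. (One could push the same column-by-column analysis further, via the minimum and its two covering atoms, to show $\Aut(C_n \times C_n) \cong \Z/2$ is generated by $f$, but this is not needed for the statement.)
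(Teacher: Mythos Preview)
Your proof is correct and follows essentially the same route as the paper: both invoke Lemma~\ref{lem:Flip}, verifying that $G_L$ is trivial by observing that an element of $G_L$ restricts to an order-automorphism of each column fiber $C_n \times \{b\} \cong C_n$, and that $\Aut(C_n)$ is trivial. The paper phrases this column-by-column reduction as a general ``meta-addition'' to Lemma~\ref{lem:Flip} (in categories where $G_L$ consists of fiberwise automorphisms and $\Aut(A)$ is trivial), whereas you carry it out concretely; you are also slightly more careful in isolating the case $n=1$ and in checking explicitly that $\pi_1 \neq \pi_2$.
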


\begin{proof}
A meta-addition to the above lemma, which we use in this proof, is that if a category is such that $G_L$ for $G = \Aut(A \times B)$ always consists of automorphisms of $A$ applied separately in each fiber, and $\Aut(A)$ is trivial, then so is $G_L$ for $G = \Aut(A \times A)$, and thus $A \times A$ has undefined alternation diameter by the previous lemma. We do not attempt to give general conditions that guarantee this, but obviously $\Pos$\ has this property. (Note that category-theoretic products are simply Cartesian products with componentwise comparison, so all fibers carry an isomorphic induced order.)

Let $C_n \times C_n$ be the category-theoretic product of $C_n$ with itself in $\Pos$. We have trivial $G_L$ for $G = \Aut(C_n \times C_n)$ since $\Aut(C_n)$ is trivial by the observation in the previous paragraph, so the previous lemma shows that $G$ has undefined alternation diameter.
\end{proof}

One can also prove that the ``boundary'' of $C_n \times C_n$ (one of the elements has maximal or minimal value) is definable, and apply Metalemma~\ref{mlem:Flip}.

\begin{corollary}
The category \Pos\ has undefined alternation diameter.
\end{corollary}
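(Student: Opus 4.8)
The plan is to invoke the definition of alternation diameter for a category together with the preceding Proposition, so that the corollary becomes essentially immediate. Recall from the introduction that a category is declared to have undefined alternation diameter precisely when \emph{some} product object in it has undefined alternation diameter, i.e.\ when the left and right groups $G_L, G_R$ fail to generate the full automorphism group $G = \Aut(A \times B)$ of that product. Thus it suffices to exhibit a single product in $\Pos$ witnessing this, and the surrounding results have already done the real work.

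First I would fix any $n \geq 2$ and take the product $C_n \times C_n$. By the Proposition, this object has undefined alternation diameter. The restriction $n \geq 2$ is what keeps the example nondegenerate: Lemma~\ref{lem:Flip} (via the meta-addition in the Proposition's proof) requires the two canonical projections $\pi_1, \pi_2 : C_n \times C_n \to C_n$ to be distinct, which holds exactly when $|C_n| \geq 2$, so that the flip automorphism is not the identity and $\langle G_L \cup G_R \rangle$ is strictly smaller than $G$. Since $C_n \times C_n$ is then a product object in $\Pos$ with undefined alternation diameter, the category $\Pos$ inherits undefined alternation diameter directly from the definition.

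The only point that needs care is the bookkeeping of the definition itself: ``undefined'' should be read as an absorbing value when one takes the least upper bound of alternation diameters over all products, so that a single bad product forces the whole category to be undefined, with no need to control the diameters of the remaining products. I expect no genuine obstacle here, as all the content lives in the Proposition. If one prefers a different witness, the surrounding text offers alternatives that make the corollary equally immediate — the diamond product $D \times D$, or the definability of the ``boundary'' of $C_n \times C_n$ combined with Metalemma~\ref{mlem:Flip} — but any one of these suffices and the argument terminates at once.
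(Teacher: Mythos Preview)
Your proposal is correct and matches the paper's approach: the corollary is stated immediately after the Proposition on $C_n \times C_n$ with no proof given, so it is treated as an instant consequence of the definition that a category has undefined alternation diameter whenever some product in it does. Your discussion of the $n \geq 2$ restriction and the alternative witnesses is accurate but more than the paper itself spells out.
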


We give another general reason why posets have undefined alternation diameter. For $G = \Aut(X \times Y)$, in nice enough concrete categories define the \emph{pure left group} as the group $G_L'$ of those $h \in \Aut(X \times Y)$ satisfying that for some $g \in \Aut(X)$, $h(x, y) = (g(x), y)$ for $x \in X, y \in Y$. In a general category, diagrammatically, we define the pure left group as the group of solutions $h$ to the rightmost diagram in Figure~\ref{fig:ComDiag} for various choices of $g \in \Aut(X)$. Similarly, define the \emph{pure right group} $G_R'$. One can show (by purely diagrammatic reasoning) that $G_L'$ and $G_R'$ are commuting subgroups of $G$.

The dual of the over category in Section~\ref{sec:GeneralDef} is the under category. If $\mathcal{C}$ is a category and $A \in \mathcal{C}$ is an object, the objects of the \emph{under category below $A$}, denoted $A/\mathcal{C}$, are morphisms $\phi : A \to B$ in $\mathcal{C}$, and morphisms between $\phi, \psi \in A/\mathcal{C}$ are morphisms of $\mathcal{C}$ between codomains of $\phi$ and $\psi$ such that the obvious diagram commutes.

\begin{lemma}
Let $\mathcal{C}$ be a category and $G = \Aut(X \times X)$. If $G_L = G_L'$, $G_R = G_R'$, and $\Aut(X)$ is nontrivial, then $G$ has undefined alternation diameter.
\end{lemma}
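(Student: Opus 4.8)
The plan is to isolate a single invariant that every element of $\langle G_L \cup G_R \rangle$ must respect, but which the flip automorphism violates. Recall from Lemma~\ref{lem:Flip} that the universal property furnishes a flip $f \in \Aut(X \times X)$ with $\pi_1 \circ f = \pi_2$ and $\pi_2 \circ f = \pi_1$; since $\Aut(X)$ is nontrivial the projections are automatically distinct (pairing $\id_X$ with a nontrivial $\sigma \in \Aut(X)$ shows $\pi_1 \neq \pi_2$), so $f \neq \id$. I would then prove $f \notin \langle G_L \cup G_R \rangle$, which is precisely the statement that $G$ has undefined alternation diameter.

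The invariant I would use is the predicate $P(e)$: ``there exists $\theta \in \Aut(X)$ with $\pi_1 \circ e = \theta \circ \pi_1$.'' First I would check $P$ on the generators. For $e \in G_L = G_L'$, purity gives exactly $\pi_1 \circ e = \theta \circ \pi_1$ for some $\theta \in \Aut(X)$, so $P(e)$ holds. For $e \in G_R$, the defining diagram gives $\pi_1 \circ e = \pi_1$, i.e. $P(e)$ with $\theta = \id_X$. (Interestingly $G_R = G_R'$ is not needed here; the symmetric invariant using $\pi_2$ would instead use only $G_R = G_R'$, so the two hypotheses play symmetric roles.) Next I would verify closure under the group operations: if $\pi_1 \circ e = \theta \circ \pi_1$ and $\pi_1 \circ e' = \theta' \circ \pi_1$ then $\pi_1 \circ (e e') = \theta\theta' \circ \pi_1$, while composing $\pi_1 \circ e = \theta \circ \pi_1$ with $e^{-1}$ yields $\pi_1 \circ e^{-1} = \theta^{-1} \circ \pi_1$. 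Hence $P$ holds for every element of $\langle G_L \cup G_R \rangle$.

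It remains to show $P(f)$ fails, and this is the only step that consumes the nontriviality of $\Aut(X)$. Suppose $\pi_1 \circ f = \theta \circ \pi_1$; since $\pi_1 \circ f = \pi_2$, this reads $\pi_2 = \theta \circ \pi_1$. I would test this identity against two morphisms produced by the universal property: the diagonal $\langle \id_X, \id_X \rangle$ gives $\id_X = \theta \circ \id_X = \theta$, while the graph $\langle \id_X, \sigma \rangle$ of a nontrivial $\sigma \in \Aut(X)$ gives $\sigma = \theta \circ \id_X = \theta$. Together these force $\sigma = \id_X$, contradicting the choice of $\sigma$. Therefore $P(f)$ fails, $f \notin \langle G_L \cup G_R \rangle$, and $G$ has undefined alternation diameter.

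I expect the only delicate point to be keeping everything point-free: all manipulations must be expressed through the projections $\pi_1, \pi_2$ and the morphisms supplied by the universal property, never through elements of an underlying set. The invariant $P$ is designed exactly so that it is a statement about the composite $\pi_1 \circ e$, and hence transfers cleanly through the categorical definitions of $G_L'$ and $G_R$. An equivalent route, matching the remark that $G_L'$ and $G_R'$ commute, would be to identify $\langle G_L \cup G_R \rangle = \langle G_L' \cup G_R' \rangle = G_L' G_R'$ and observe directly that every element of $G_L' G_R'$ satisfies $P$; the invariant formulation merely streamlines this and avoids invoking the commutation claim.
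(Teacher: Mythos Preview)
Your argument is correct and follows essentially the same route as the paper: both proofs exhibit the flip $f$ as the witness, reduce membership in $\langle G_L\cup G_R\rangle$ to an identity of the form $\pi_1\circ e=\theta\circ\pi_1$ for some $\theta\in\Aut(X)$ (your invariant $P$ is exactly the paper's statement that $\pi_1$ and $\pi_1\circ e$ are isomorphic in the under category below $X\times X$), and then rule this out for $e=f$ by testing against the diagonal.

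There are two cosmetic differences worth noting. First, the paper reaches $\langle G_L\cup G_R\rangle\subset\{e:P(e)\}$ via the internal direct product $G_L'\times G_R'$ (using commutation of the pure groups), whereas you verify closure of $P$ under composition and inversion directly; as you remark yourself, these are equivalent. Second, to show no $\theta$ satisfies $\pi_2=\theta\circ\pi_1$, the paper first proves $\pi_1\neq\pi_2$ via an epicness argument and then tests only against the diagonal to force $\theta=\id_X$; you bypass the epicness step by testing against both $\langle\id_X,\id_X\rangle$ and $\langle\id_X,\sigma\rangle$, which is a bit cleaner. Your observation that the argument consumes only the hypothesis $G_L=G_L'$ (not $G_R=G_R'$) is correct and is a mild sharpening of the paper's statement.
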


\begin{proof}
We first show $f$, defined as in Lemma~\ref{lem:Flip}, is not equal to $\id_{X \times X}$ under the assumption that $\Aut(X)$ is nontrivial. Observe first that the projections $\pi_i : X \times X \to X$ are always epic by studying the leftmost of the two commutative diagrams in Figure~\ref{fig:ComDiag}.

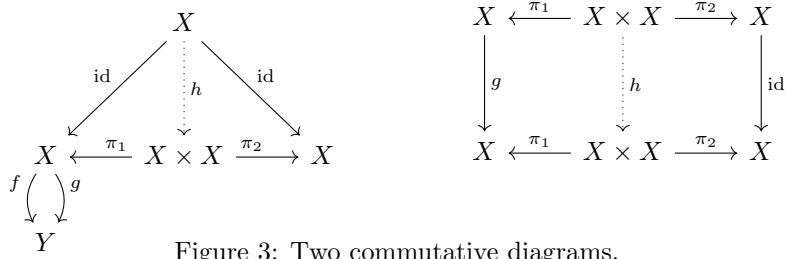
\begin{figure}
\begin{center}
\begin{tikzcd}
  & X \arrow[swap]{ldd}{\id} \arrow[dotted]{dd}{h} \arrow{rdd}{\id} & \\
  & & \\
X \arrow[bend right,swap]{d}{f} \arrow[bend left]{d}{g} & X \times X \arrow[swap,near start]{l}{\pi_1} \arrow[near start]{r}{\pi_2} & X \\
Y & &
\end{tikzcd} \hspace{1.3cm}
\begin{tikzcd}
X \arrow{dd}{g}  & X \times X \arrow[swap]{l}{\pi_1} \arrow{r}{\pi_2} \arrow[dotted]{dd}{h} & X \arrow{dd}{\id} \\
 &            &  \\
X  & X \times X \arrow[swap]{l}{\pi_1} \arrow{r}{\pi_2} & X \\
& & \\
& &
\end{tikzcd}
\end{center}
\vspace{-1cm}
\caption{Two commutative diagrams.}
\label{fig:ComDiag}
\end{figure}


Observe that if $f = \id_{X \times X}$, then $\pi = \pi_1 = \pi_2$, and the rightmost commutative diagram in Figure~\ref{fig:ComDiag} yields $g \circ \pi = \id \circ \pi$, so $g \in \Aut(X) \implies g = \id_X$ by epicness, contradicting the nontriviality of $\Aut(X)$.

The above shows that $\pi_1 \neq \pi_2$. We need the stronger fact that $\pi_1$ and $\pi_2$ are not even isomorphic in the under category below $X \times X$. Suppose they were, and $g \in \Aut(X)$ is such that $g \circ \pi_1 = \pi_2$. Now consider the leftmost diagram in Figure~\ref{fig:ComDiag2}.

\begin{figure}
\begin{center}
\begin{tikzcd}[cramped]
& X \arrow[swap]{ldddd}{\id} \arrow{rdddd}{\id} \arrow[dotted, "{h}" description]{dd} & \\
& & \\
& X \times X \arrow{ldd}{\pi_1} \arrow[swap]{rdd}{\pi_2} \arrow["{\pi_1}" description]{dd} & \\
& & \\
X  & X \arrow{l}{\id} \arrow[swap]{r}{g} & X
\end{tikzcd} \hspace{1.4cm}
\begin{tikzcd}[cramped]
X \arrow{dd}{g_L}  & X \times X \arrow[swap]{l}{\pi_1} \arrow{r}{\pi_2} \arrow[dotted]{dd}{f} & X \arrow{dd}{g_R} \\
 &            &  \\
X  & X \times X \arrow[swap]{l}{\pi_1} \arrow{r}{\pi_2} & X \\
& &
\end{tikzcd}
\end{center}
\caption{Two more commutative diagrams.}
\label{fig:ComDiag2}
\end{figure}
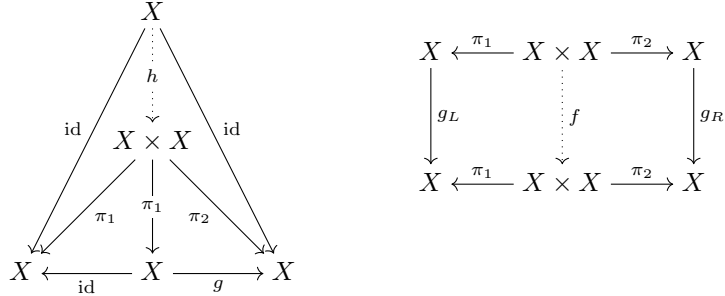

This diagram clearly commutes before and after $h$ is obtained from the universal property of $X \times X$. Two readings show that $g \circ \pi_1 \circ h = \id_X$ and $\pi_1 \circ h = \id_X$, so $g = \id_X$. But this contradicts $\pi_1 \neq \pi_2$, shown above.

We now show that the assumptions $G_L = G_L'$ and $G_R = G_R'$ imply that $f$ is not in $\langle G_L, G_R \rangle$. To see this, observe that $\langle G_L \cup G_R \rangle = \langle G_L' \cup G_R' \rangle = G_L' \times G_R'$ (an internal direct product in $G$) since $G_L'$ and $G_R'$ commute. The diagrammatic statement of $f \in G_L' \times G_R'$ (seen similarly as the commutation of $G_L'$ and $G_R'$) is the fact that the rightmost diagram in Figure~\ref{fig:ComDiag2} commutes, where $f$ is the flip and $g_R$ and $g_L$ are some automorphisms of $X$. Thus $\pi_2 = \pi_1 \circ f = g \circ \pi_1$ for some $g \in \Aut(X)$, showing that $\pi_2$ and $\pi_1$ are isomorphic in the under category below $X \times X$, contradicting what we showed.
\end{proof}

Let now $D$ be the \emph{diamond}, i.e.\ the poset $\{0,1\}^2$ under cellwise comparison. Note that this is a lattice, i.e.\ we can define operations $\vee, \wedge$ mapping a pair of elements $a, b$ to their unique supremum $a \vee b$ and infimum $a \wedge b$.

\begin{proposition}
In the category \Pos\ of posets and weakly increasing functions, $D \times D$ has undefined alternation diameter.
\end{proposition}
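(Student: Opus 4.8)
The proposition claims $D \times D$ has undefined alternation diameter, where $D = \{0,1\}^2$ is the diamond. The natural tool is the lemma just proved: if $G_L = G_L'$, $G_R = G_R'$, and $\Aut(X)$ is nontrivial, then $\Aut(X \times X)$ has undefined alternation diameter. So I need to verify these three hypotheses for $X = D$ in **Pos**.

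Let me think about this.

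**The hypotheses to check.**

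1. $\Aut(D)$ is nontrivial. $D$ is the diamond: bottom $(0,0)$, top $(1,1)$, two incomparable middle elements $(1,0)$ and $(0,1)$. The map swapping the two middle elements (and fixing top/bottom) is an order-automorphism. So $\Aut(D) \cong \mathbb{Z}/2$, nontrivial. ✓

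2. $G_L = G_L'$: every automorphism of $D \times D$ that fixes the right projection (i.e. in $G_L$) is actually of the form $(x,y) \mapsto (g(x), y)$ for a fixed $g \in \Aut(D)$ — i.e. the action on each fiber is the *same* automorphism of $D$, not varying with $y$.

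3. $G_R = G_R'$: symmetric.

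**The crux.** The whole proposition reduces to point 2 (and its mirror). The category-theoretic product $D \times D$ in **Pos** is the Cartesian product with componentwise order. An element $h \in G_L$ satisfies $\pi_2 \circ h = \pi_2$, so $h(x,y) = (g_y(x), y)$ where each $g_y$ is a permutation of $D$. I need: (a) each $g_y$ is actually an *order*-automorphism of $D$, and (b) $g_y$ is independent of $y$.

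For (a): $h$ must be order-preserving. Is that automatic for each fiber? The fiber over $y$ is $D \times \{y\}$, which inherits the order of $D$; since $h$ maps this fiber to itself order-isomorphically (its inverse does too), each $g_y \in \Aut(D)$.

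For (b) — independence of $y$ — this is the real content. I'd argue using comparabilities *across* fibers. Take $y < y'$ in $D$ (e.g. $y = (0,0)$, $y' = (1,1)$). For any $x, x' \in D$, if $x \le x'$ then $(x,y) \le (x',y')$ in $D \times D$, so $h$ preserves this: $(g_y(x), y) \le (g_{y'}(x'), y')$, which forces $g_y(x) \le g_{y'}(x')$. Specializing cleverly (e.g. $x = x'$, then varying) should pin down $g_y = g_{y'}$ on comparable chains, and because $D$'s automorphism group is small ($\mathbb{Z}/2$, determined by its action on a single middle element), I expect to conclude $g_y = g_{y'}$ whenever $y, y'$ are comparable. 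Since $D$ is connected (its comparability graph is connected — everything is comparable to top and bottom), this propagates to give a single global $g$.

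**Expected main obstacle.** The delicate step is establishing independence of the fiber (point b): I must rule out "twisted" maps that apply the swap in some fibers but not others. The key leverage is that $\Aut(D) = \mathbb{Z}/2$ acts by swapping the two incomparable middle elements $m_1 = (1,0)$, $m_2 = (0,1)$, so $g_y$ is determined by whether $g_y(m_1) = m_1$ or $g_y(m_1) = m_2$. I would track the element $(m_1, y)$ and a comparable element in an adjacent fiber, and use that order-preservation across fibers is incompatible with the swap being applied inconsistently — roughly, if $g_y$ swaps but $g_{y'}$ does not for comparable $y, y'$, then some comparability $(x,y) \le (x', y')$ gets sent to a non-comparable pair. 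Carrying this bookkeeping carefully (and checking the mirror claim $G_R = G_R'$, which is identical by the symmetry of $D \times D$) is routine once the swap-tracking idea is in place. With the three hypotheses verified, the lemma immediately gives undefined alternation diameter for $D \times D$.
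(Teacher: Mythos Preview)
Your approach is correct and matches the paper's: verify the three hypotheses of the preceding lemma ($\Aut(D)$ nontrivial, $G_L=G_L'$, $G_R=G_R'$) and conclude. The paper compresses the verification that $G_L=G_L'$ into the phrase ``a short analysis'' (implicitly using that $D\times D\cong\{0,1\}^4$ has automorphism group $S_4$, so $G_L$ is just the $S_2$ permuting the first two bits), whereas you spell it out directly by tracking the swap across comparable fibers; both arguments are valid and lead to the same conclusion.
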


\begin{proof}
The automorphism group of $D \times D$ consists of order-automorphisms of $\{0,1\}^4$ under cellwise comparison. A short analysis shows that $G_L = G_L'$, $G_R = G_R'$, but clearly $\{0,1\}^2$ has nontrivial automorphism group. The previous lemma shows that the alternation diameter is undefined.
\end{proof}

\begin{remark}
\label{rem:CPP}
A more fancy reason that $G_L = G_L'$ and $G_R = G_R'$ in the above example is that for a universal-algebraic variety such as that of lattices, products are the obvious kind, and the variety of lattices has the \emph{congruence-product property} stating that congruences of products are products of congruences. See \cite[Theorem~13]{Gr71} for a proof; the name was used in \cite{SaTo12a} where we applied this property in the context of cellular automata. Indeed, the congruence-product property implies a similar automorphism group description as Section~\ref{sec:Graphs} for finite lattices, i.e.\ every object is a direct product in a unique way, and automorphisms come from reordering the product and applying automorphisms to the directly irreducible components. In this sense, finite lattices also have ``bounded alternation diameter up to reordering of prime factors''.
\end{remark}

Maximilien Gadouleau has shown (private communication) that for a finite poset $X$, the flip is in $\langle G_L \cup G_R \rangle$ if and only if $X \times X$ has defined alternation depth if and only if $X$ is a trivial poset (with no comparable pairs $x \neq y, x \leq y$).

\section{Questions}
\label{sec:Q}


It seems that assuming the axiom of choice (but not necessarily CH), the proof of Theorem~\ref{thm:CountableCase} generalizes at least to pairs of sets $A, B$ with $|A| = |B|$, and we conjecture that such pairs have alternation diameter $4$. We do not know what the general diameter is in $\Set$.

In the case of vector spaces, we do not know what happens with infinite-dimensional vector spaces (possibly with additional structure), or when $k$ is replaced by a ring.

In the case of topological spaces, we do not know which numbers appear as alternation diameters of products $A \times B$, and we do not know the alternation diameter of $\R^m \times \R^n$ as a function of $m, n$. Another interesting direction would be to study less wild homeomorphisms. We conjecture that on the plane, piecewise linear self-homeomorphisms with finitely many polygonal pieces have infinite (as opposed to undefined) alternation diameter.

We do not know if numbers $3$ and $4$ that appeared for sets and vector spaces have any general significance. Do such upper bounds follow from a more general property? Is there is a natural common generalization of Theorem~\ref{thm:Two} and Theorem~\ref{thm:FinVect}? Does every number $n$ appear as the alternation diameter of a (hopefully reasonably natural) category, or at least a product object? Are there (naturally occurring) categories where the alternation diameter is defined but infinite? (This is possibly answered in \cite{AhBuCi09}.) Are there ones where the diameter is infinite, but all objects have finite alternation diameter?

There are many interesting categories not mentioned here where alternation diameter can be studied. Typically, we expect that category will have undefined alternation diameter, in that $\Aut(X \times Y)$ is simply not generated by $G_L$ and $G_R$. Nevertheless, it is interesting to ask how big diameters can be when they exist, and what the nontrivial situations are where $G_L$ and $G_R$ do generate $\Aut(X \times Y)$. In cases where they do not, one can ask what subgroup of $\Aut(X \times Y)$ they generate, and how much needs to be added to obtain all of $\Aut(X \times Y)$.

A natural generalization of this study is to replace the category-theoretic product by another type of product. One can consider products with abstract properties such as tensor products of vector spaces (cf. \cite{La03}), or ones that are otherwise evidently natural such as the box product of graphs discussed in Section~\ref{sec:Graphs} (there are several graph product notions).

One may also study the dual of the definition of alternation diameter in (natural) concrete categories, or equivalently consider the definition in opposite categories of concrete categories. It seems that in certain settings, something morally stronger happens, and the automorphism group of a product object is fully described by the automorphism groups of the components, though we may need to pick the right product decomposition, similarly as what happened in Section~\ref{sec:Graphs} with graphs under the box product. 

For example, in the opposite category of finite graphs (homomorphisms of graphs map edges to edges, but can also map non-edges to edges -- the opposite category has the inverses of such maps as morphisms), prime decomposition into a product project means decomposition into connected components, and is well-known to be unique up to ordering. Automorphisms preserve the components of this product since paths map to paths in a graph automorphism, and we are lead to the known representation of the automorphism group of a finite graph as (a direct product of) wreath products of symmetric groups acting by permuting isomorphic components, together with automorphism groups of the individual components, i.e. we have described the automorphism group of a product object in terms of the components of the product. It seems that this kind of behavior is quite common, but we do not know any nice general conditions.

As mentioned in the introduction, the idea of alternation diameter arose from the study of the category of subshifts in \cite{SaSc16a}, where we looked at some special situations where the alternation diameter is finite. The category of all ($\Z$-)subshifts and block maps was studied in \cite{SaTo15a} under the name K4, and its products are the obvious ones. We do not know what alternation diameters can appear for products in this category. An easy application of Metalemma~\ref{mlem:Flip} shows that the category itself has undefined alternation diameter: Let $X$ be the $\Z$-sunny-side-up, i.e. the orbit closure of the characteristic function of $\{0\} \subset \Z$. Then $X \times \{0^\Z\} \cup \{0^\Z\} \times X$ is the Cantor-Bendixson derivative of $X^2$, thus definable.

Finally, all these questions can be studied for endomorphism monoids $\End(A \times B)$. Some results in this direction can be found by following the citations in Section~\ref{sec:Existing}.

\section*{Acknowledgements}

We thank Peter Selinger for a brief discussion in Reversible Computation~2016 after his talk, where he shared his intuition that (contrary to what I thought) $A \times B$ should have finite alternation diameter for finite $A$ and $B$. We thank Maximilien Gadouleau for pointing out several existing results (Section~\ref{sec:Existing}) on alternation diameter in the literature and for other helpful remarks.

\bibliographystyle{plain}
\bibliography{../../../bib/bib}{}

\end{document}